\documentclass{amsart}
\usepackage[utf8]{inputenc}

\usepackage{amssymb}
\usepackage{amsthm}

\usepackage[capitalise]{cleveref}		

\theoremstyle{plain}
\newtheorem{thm}{Theorem}[section]
\newtheorem{lem}[thm]{Lemma}
\newtheorem{cor}[thm]{Corollary}

\newtheorem{prop}[thm]{Proposition}

\theoremstyle{definition}
\newtheorem{dfn}[thm]{Definition}
\newtheorem{ntn}[thm]{Notation}

\newtheorem{claim}[thm]{Claim}

\newtheorem{qstn}[thm]{Question}

\newcommand{\converge}{\!\!\downarrow}
\newcommand{\cat}{\widehat{\phantom{\alpha}}}
\newcommand{\uh}{\!\!\upharpoonright\!}

\newcommand\seq[1]{\langle #1 \rangle}

\renewcommand\*[1]{\mathbf{#1}}
\renewcommand\=[1]{\text{#1}}

\begin{document}

\title{Limit Complexities, Minimal Descriptions, and $n$-Randomness}

\author{Rodney Downey}

\address{School of Mathematics and Statistics\\
Victoria University \\
PO Box 600, Wellington, 6140\\
New Zealand}

\author{Lu Liu}

\address{School of Mathematics and Statistics,  HNP-LAMA\\
Central South University\\
Changsha, Hunan Province,\\
China. 410083
}

\author{Keng Meng Ng}

\address{Mathematics Department\\
Nanyang University of Technology\\
Singapore}

\author{Daniel TuretskY}

\address{School of Mathematics and Statistics \\
Victoria University \\
PO Box 600, Wellington, 6140\\
New Zealand}

\thanks{All supported by the Marsden Fund of New Zealand, with Lu on a 
Postdoctoral Fellowship. We thank Denis Hirschfeldt, Joeseph Miller and Andr{\'e} Nies for helpful discussions.}
\maketitle

\begin{abstract}
Let $K$ denote prefix-free Kolmogorov Complexity, and $K^A$ denote it relative to an oracle $A$. We show that for any $n$, $K^{\emptyset^{(n)}}$ is definable 
purely in terms of the unrelativized notion $K$. It was already 
known that 2-randomness is definable in terms of $K$ (and plain complexity $C$)
as those reals which infinitely often have maximal complexity. We can use our characterization
to show that $n$-randomness is definable purely in terms of $K$.
To do this we extend a certain ``limsup'' formula from the literature, and 
apply Symmetry of Information. This extension entails a novel use of 
semilow sets, and a more precise analysis of the complexity of 
$\Delta_2^0$ sets of mimimal descriptions.
\end{abstract}

\section{Introduction}
\subsection{$n$-randomness}
The concern of this paper is 
Kolmogorov complexity, where $C$ denotes plain complexity and 
$K$ prefix-free complexity. 
A fundamental theorem in the theory of algorithmic randomness 
is Schnorr's Theorem (see Chaitin \cite{58}) that a real $X$ 
is Martin-L\"of random iff $K(X\upharpoonright n)\ge^+ n$ for all
$n$.
We know that this result relativises, and hence 
a real $X$ is $A$-random iff
$K^A(X\upharpoonright n)\ge^+ n$ for all
$n$.
In particular, if $A=\emptyset^{(n)}$, then 
this says that $X$ is $n+1$-random iff 
$K^{\emptyset^{(n)}}(X\upharpoonright n)\ge^+ n$, for all $n$.

The reason we need to use prefix-free complexity in the definition 
of $k$-randomness is that, as proven by Martin-L\"of, there is 
no real $X$ such that $C(X\upharpoonright n)\ge ^+ n$ for all $n$,
due to complexity oscillations (Martin-L\"of \cite{ML66,ML71})
and the failure of $C$ to capture the intentional meaning of least descriptions. (See, e.g.\ Downey and Hirschfeldt \cite{DH},
Chapter 6, or Nies \cite{Ni09}.)
One of the striking results in this theory is 
there are reals where  $C(X\upharpoonright n)\ge ^+ n$  for infinitely many
$n$, and these coincide with one of the randomness classes
(and indeed this also holds for an analogous 
fact about strong $K$-randomness\footnote{A string $\sigma$ is 
called \emph{strongly} $K$-random if it achieves maximal
$K$-complexity, namely $K(\sigma)=^+ |\sigma|+K(|\sigma|).$}), as we see below.

\begin{thm}
\label{the1} \ 
\begin{itemize}
\item ~{\em [Miller \cite{Mill}, Nies, Stephan, and Terwijn \cite{NST}]}
$X$ is 2-random iff
\[
C(X\uh n)\ge ^+ n
\]
for infinitely many $n$.
\item ~{\em [Miller \cite{Mill2}]}
$X$ is 2-random iff 
\[
K(X\uh n)\ge ^+ n +K(n)
\]
for infinitely many $n$.
\end{itemize}
\end{thm}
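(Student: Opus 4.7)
My plan is to prove both clauses via the characterization of $2$-randomness as passing every $\Sigma_2^0$ (i.e.\ $\emptyset'$-relativized) Martin--L\"of test, together with an analysis of how non-$2$-randomness forces initial-segment complexity to collapse.

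For the forward direction of part~(1), set
\[
U_c \;=\; \bigcup_N \bigl\{X : (\forall n \geq N)\, C(X \uh n) < n - c\bigr\}.
\]
Since ``$C(\sigma) < k$'' is $\Sigma_1^0$, $U_c$ is $\Sigma_2^0$; the $N$-th constituent is contained in $\{X : C(X \uh N) < N - c\}$, which has measure at most $2^{-c}$ because fewer than $2^{N-c}$ length-$N$ strings have $C$-value below $N-c$. Hence $\{U_c\}$ is a $\Sigma_2^0$-ML test, so any $2$-random $X$ avoids some $U_c$ and the i.o.\ condition holds. For part~(2), the analogous test with threshold $n + K(n) - c$ works; the measure bound reduces, via the Symmetry of Information identity $K(\sigma) =^+ K(n) + K(\sigma \mid n^*)$ for $|\sigma|=n$, to the elementary count $|\{\sigma \in 2^n : K(\sigma \mid n^*) < n-c\}| \leq 2^{n-c}$.

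For the converses, assume $X$ is caught by a universal $\Sigma_2^0$-ML test $\{V_m\}$ and show $C(X\uh n) < n - c$, respectively $K(X\uh n) < n + K(n) - c$, for all sufficiently large $n$. Decompose $V_m = \bigcup_k P_{m,k}$ into a uniformly $\Pi_1^0$ sequence with $\sum_k \mu(P_{m,k}) \leq 2^{-m}$, and for the ``robust'' length-$n$ strings $\sigma$ with $\mu([\sigma] \cap V_m) \geq 2^{-n-1}$---which number at most $2^{n-m+1}$---describe $X \uh n$ by its index in this list (at an appropriate approximation stage of the $\Pi_1^0$ data). For $X$ a Lebesgue density point of the relevant $P_{m,k}$, this works for almost all $n$, giving $C(X \uh n) \leq n - m + O(\log m)$; prepending a prefix-free description of $n$ then yields $K(X\uh n) \leq K(n) + n - m + O(\log m)$, below $n + K(n) - c$ for $m$ chosen large compared with $c$.

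The main obstacle is the Lebesgue-density step, since non-$2$-random reals form a null set to which naive almost-everywhere statements do not directly apply. The standard remedy, which I would adopt, is to absorb the (null, $\Sigma_2^0$-definable) set of non-density points of the $P_{m,k}$ into an auxiliary $\Sigma_2^0$-ML test and combine it with $\{V_m\}$; every non-$2$-random $X$ is then either a density point of some $P_{m,k}$ or caught by this auxiliary test, and the robust-index compression applies in either case. Making the approximation stage effective---so that the description of $X \uh n$ is unrelativized rather than $\emptyset'$-computable---is the secondary technical fine-tuning, handled by fixing the stage as a computable function of $n$ large enough for the relevant approximation to stabilise on the initial segments of interest.
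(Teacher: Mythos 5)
This theorem is stated in the paper with citations to Miller and to Nies--Stephan--Terwijn; the paper does not give its own proof, so I will assess your proposal on its own merits.

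The forward direction has a genuine gap in the complexity classification. You write $U_c = \bigcup_N \{X : (\forall n \geq N)\, C(X\uh n) < n - c\}$ and claim this is $\Sigma^0_2$ because ``$C(\sigma) < k$'' is $\Sigma^0_1$. But the inner set $\{X : (\forall n \geq N)\, C(X\uh n) < n - c\}$ is a $\forall \exists$ (i.e.\ $\Pi^0_2$) class, not $\Pi^0_1$: it is a countable intersection of $\Sigma^0_1$ \emph{open} sets, not closed ones. The outer $\exists N$ then puts $U_c$ at level $\Sigma^0_3$. Equivalently, since $C$ is $\emptyset'$-computable but not $\emptyset'$-c.e., the inner class is $\Pi^0_1(\emptyset')$ and $U_c$ is $\Sigma^0_2(\emptyset')$, hence \emph{not} a component of an $\emptyset'$-ML test, and a $2$-random need not avoid it. This is precisely the obstacle that Nies--Stephan--Terwijn and Miller circumvent by introducing $C$- and $K$-compression functions (a low, or weakly-low-for-$\Omega$, path through a $\Pi^0_1$ class of functions approximating $C$ or $K$ from above); relativizing to such a path makes the relevant complexity predicate decidable without adding randomness-theoretic strength. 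The paper uses exactly this technology in \S\ref{colour} in the proof of Miller's unpublished theorem, and that is the right toolbox here; a direct measure-theoretic test cannot be built from the complexity condition at the level you assert.

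The converse direction is also not convincing as written. You propose to read off a short description of $X\uh n$ from its index among the ``robust'' strings of a component $V_m$, and then to convert the implicit $\emptyset'$-description into a plain one by ``fixing the stage as a computable function of $n$ large enough for the relevant approximation to stabilise.'' No computable stage function can guarantee that a $\Sigma^0_1(\emptyset')$ approximation has stabilised on the segments you need, and what you must prove is that $C(X\uh n) < n - c$ for \emph{almost all} $n$ -- a bound at stages that happen to be correct only gives the estimate for a sparse (and not even c.e.) set of $n$. The Lebesgue-density detour raises a separate worry: the non-density points of a $\Pi^0_1$ class do not form a class whose arithmetical complexity and measure are under the kind of control you need to fold them into an auxiliary $\emptyset'$-ML test, and this is a delicate area (cf.\ the density/covering literature) rather than a ``secondary technical fine-tuning.'' In short, both directions need the compression-function/basis-theorem machinery that the actual proofs use; the naive test construction and the stage-fixing trick both fail.
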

Thus, there is a definition of being $2$-random involving 
the basic definition of $K$ or $C$ which does not involve any 
relativization. 

One goal in this paper is to give a definition of $n$-randomness for all 
$n\in \omega$ only involving $K$, by giving a definition
of $K^{(n)}$ purely in terms of $K$. 
Our starting point is the following attractive result.

\begin{thm}[Bienvenu, Muchnik, Shen and Vereshchagin~\cite{Bei}]\label{thm:motivation}
\[
K^{\emptyset'}(\sigma)=^+\limsup_n K(\sigma \mid n).
\]
\end{thm}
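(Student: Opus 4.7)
The plan is to prove two inequalities. The easy direction $\limsup_n K(\sigma\mid n) \leq^+ K^{\emptyset'}(\sigma)$ is by direct simulation: fix the standard computable approximation $\emptyset'_s \to \emptyset'$, and given a shortest $\emptyset'$-program $p$ for $\sigma$ with use $u$, pick $s_0$ so that $\emptyset'_s\uh u = \emptyset'\uh u$ for all $s \geq s_0$. The uniformly c.e.\ conditional machine $V(p,n) := U^{\emptyset'_n}(p)$ then outputs $\sigma$ for every $n \geq s_0$, giving $K(\sigma\mid n) \leq K^{\emptyset'}(\sigma) + O(1)$ for cofinitely many $n$.

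For the hard direction $K^{\emptyset'}(\sigma) \leq^+ \ell(\sigma)$, writing $\ell(\sigma) := \limsup_n K(\sigma\mid n)$, the target is a $\emptyset'$-c.e.\ discrete semimeasure $m$ with $m(\sigma) \geq 2^{-\ell(\sigma)-O(1)}$; the relativized coding theorem then gives $K^{\emptyset'}(\sigma) \leq -\log m(\sigma) + O(1)$. The total-mass budget is comfortable: Fatou's lemma applied to the conditional Kraft inequalities $\sum_\sigma 2^{-K(\sigma\mid n)} \leq 1$ yields
\[
\sum_\sigma 2^{-\ell(\sigma)} \;=\; \sum_\sigma \liminf_n 2^{-K(\sigma\mid n)} \;\leq\; \liminf_n \sum_\sigma 2^{-K(\sigma\mid n)} \;\leq\; 1,
\]
so the ``ideal'' semimeasure $\sigma \mapsto 2^{-\ell(\sigma)}$ already has total mass at most $1$, with $O(1)$-factor slack for implementation losses.

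The main obstacle is effectivity. The predicate ``$\ell(\sigma)\leq k$'', namely $(\exists N)(\forall n\geq N)\,K(\sigma\mid n)\leq k$, is syntactically $\Sigma_3^0$, whereas a $\emptyset'$-c.e.\ enumeration accesses only $\Sigma_2^0 = \Sigma_1^{\emptyset'}$ predicates. One therefore cannot simply list the pairs $(k,\sigma)$ with $\ell(\sigma) \leq k$ and feed them into the Kraft--Chaitin theorem relative to $\emptyset'$.

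I would circumvent this by working with a Shoenfield-style limit machine: since $K^{\emptyset'}(\sigma) =^+ \min\{|p| : \lim_s V(p,s) = \sigma\}$ for a universal c.e.\ prefix-free limit machine $V$, it suffices to build such a $V$ in which every $\sigma$ with $\ell(\sigma) = k$ has a stable program of length $\leq k + O(1)$. The construction proceeds in stages, tracking for each $\sigma$ a current guess $k_s(\sigma) := \max_{n\in[N_s,s]} K_s(\sigma\mid n)$ with $N_s$ growing slowly, and an assigned program $p_s(\sigma)$ drawn from a prefix-free reservoir; whenever $k_s(\sigma)$ drops, one reassigns $\sigma$ to a shorter code. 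The Fatou bound, with a constant-factor slack absorbing retractions, keeps the reservoir within the Kraft budget, while eventual stability of $k_s(\sigma)$ at $\ell(\sigma)$ forces $\lim_s V(p_\infty(\sigma),s) = \sigma$ for some $p_\infty(\sigma)$ of the required length. The priority-style bookkeeping needed to guarantee prefix-freeness of the reservoir, stabilization of each relevant program, and the weight bound simultaneously is the technical heart of the argument, and requires a careful analysis of how minimal conditional descriptions evolve along the $\Delta_2^0$ approximation.
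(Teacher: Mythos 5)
The paper does not actually prove \Cref{thm:motivation}; it is cited verbatim from Bienvenu, Muchnik, Shen and Vereshchagin (the ``folly approximate'' remarks in \S 6 gesture only at a proof of the \emph{easy} direction and at what goes wrong if one tries to read off the limsup from true stages). So there is no in-paper argument against which to match yours; I can only assess the proposal on its own terms.

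Your easy direction is correct and standard: $V(p,n) := U^{\emptyset'_n}(p)$ is uniformly c.e.\ and prefix-free in $p$ for each fixed $n$, and once $\emptyset'_n$ is stable below the use of a minimal $\emptyset'$-program, $V$ reproduces it, giving $K(\sigma\mid n)\le^+ K^{\emptyset'}(\sigma)$ cofinitely. The framing of the hard direction is also right: you correctly identify that the Fatou bound $\sum_\sigma 2^{-\ell(\sigma)}\le 1$ supplies exactly the mass budget one needs, and you correctly diagnose the real obstacle, namely that ``$\ell(\sigma)\le k$'' is $\Sigma^0_3$ and so cannot be fed directly into a $\emptyset'$-relativized Kraft--Chaitin argument.

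The gap is in the limit-machine construction, specifically in the claim that $k_s(\sigma):=\max_{n\in[N_s,s]}K_s(\sigma\mid n)$ eventually stabilizes at $\ell(\sigma)$. This is asserted but not argued, and it is the crux. There is a genuine tension in the choice of window: if the upper end of the window runs near $s$, then for $n$ close to $s$ the approximation $K_s(\sigma\mid n)$ has not yet settled (it may still sit at the trivial default bound, or be undefined), so $k_s(\sigma)$ need not descend to $\ell(\sigma)$ at all; if instead the window is kept well below $s$ so that $K_s=K$ on it, then the window is a moving finite interval that may \emph{miss} the (possibly very sparse) witnesses $n$ with $K(\sigma\mid n)=\ell(\sigma)$, in which case $\max_{n\in\text{window}}K(\sigma\mid n)$ is strictly less than $\ell(\sigma)$ for infinitely many $s$. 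Either way $k_s(\sigma)$ need not be eventually monotone nonincreasing, so ``whenever $k_s(\sigma)$ drops, reassign to a shorter code'' does not suffice: you also have to handle upward jumps, and the simple ``constant-factor slack'' bookkeeping against the Fatou budget no longer covers the possibility of repeatedly locking into codes \emph{shorter} than $\ell(\sigma)$ during the under-shoots. Some further idea is needed here --- e.g.\ exploiting the $2^k$ cardinality bound on $\{\sigma: K(\sigma\mid k)\le k\}$ to control how much mass can be committed at each level $k$ --- and without it the proposed construction does not close.
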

The same result holds for $C$ in place of $K$.

Here we remind the reader that 
$K(\sigma\mid n)$ is the prefix-free Kolmogorov complexity
of $\sigma$ when $\overline{n}$, a self-delimited version of $n$,
is provided as an oracle.
The reader might think that 
this result provides a definition  of 
$K^{\emptyset'}$ in terms of $K$, but 
$K(\sigma\mid n)$ is not an unrelativized notion. 
Indeed, in \S \ref{misc}, we will see that 
finite strings can have very strong compression power.
In \S \ref{misc}, we will also give a partial analysis 
as to precisely for which $n$, the limsup is achieved.

Nevertheless, our plan is to leverage 
$K(\sigma\mid n)$, and we will do this using 
Symmetry of Information (Levin and G\'acs \cite{Ga}, Chaitin \cite{58})
which  says that 
\[
K(\sigma,n)=^+ K(n)+K(\sigma \mid n^*).
\]
Here the reader should recall that 
$K(\nu,\rho)$ is the complexity of the 
pair $\langle \nu,\rho\rangle$, and
that for any string $\tau$, $\tau^*$ is the first to occur of length $K(\tau)$
with $U(\tau^*)=\tau$. (In the case of $C$
we will write $\tau^*_C$.)

While $\tau^*$ is a particular minimal code for $\tau$ (the first to appear), it may not be the only code for $\tau$ of length $K(\tau)$.  We will also be interested in all minimal codes, and so we adopt the following notation.

\begin{ntn}
For a universal machine $U$ (prefix-free or otherwise), let $N_U = \{ n^* :  n \in \omega\}$, where $n^*$ is defined based on $U$.  Let $M_U = \{ \rho : U(\rho)\converge \wedge |\rho| = K(U(\rho))\}$.

When $U$ is clear from context, we will omit the subscript.
\end{ntn}
Observe that $N_U \subseteq M_U$.

In \S \ref{main}
we will prove the following.

\begin{thm}\label{thm:main_result}
For any universal prefix-free machine $U$, 
\[
\limsup_n K(\sigma \mid n^*) = \limsup_{\tau \in M_U} K(\sigma\mid \tau) = K^{\emptyset'}(\sigma).
\]
\end{thm}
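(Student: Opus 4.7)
The theorem asserts that the three quantities
\[
(a) = \limsup_n K(\sigma\mid n^*),\quad (b) = \limsup_{\tau\in M_U} K(\sigma\mid\tau),\quad (c) = K^{\emptyset'}(\sigma)
\]
coincide up to an additive constant. The plan is to close the cycle $(a)\leq(b)\leq^+(a)$, $(a)\leq^+(c)$, and $(c)\leq^+(a)$.

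\textbf{The equality $(a)=^+(b)$.} The direction $(a)\leq(b)$ is immediate from $N_U\subseteq M_U$. For the reverse, any $\tau\in M_U$ with $U(\tau)=n$ satisfies $|\tau|=K(n)$, so from $\tau$ we can compute $n$ and then locate $n^*$ by enumerating $U$ on length-$|\tau|$ strings and taking the first one that maps to $n$; this yields $K(\sigma\mid\tau)\leq^+ K(\sigma\mid n^*)$. Every $n$ is realised by $\tau=n^*\in M_U$, and $|\tau|\to\infty$ forces $U(\tau)\to\infty$ (because $K(m)\to\infty$), so a routine $\limsup$ argument gives $\limsup_{\tau\in M_U} K(\sigma\mid (U(\tau))^*)=\limsup_n K(\sigma\mid n^*)$, whence $(b)\leq^+(a)$.

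\textbf{The easy bound $(a)\leq^+(c)$.} From $U(n^*)=n$ we get $K(\sigma\mid n^*)\leq^+ K(\sigma\mid n)$, and Theorem~\ref{thm:motivation} then supplies $(a)\leq\limsup_n K(\sigma\mid n)=^+ K^{\emptyset'}(\sigma)$.

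\textbf{The hard direction $(c)\leq^+(a)$, and the main obstacle.} The plan is to show that $\limsup_n K(\sigma\mid n^*)\leq c$ implies $K^{\emptyset'}(\sigma)\leq^+ c$ by building a $\Sigma^0_2$ Kraft--Chaitin request and invoking the KC theorem relativised to $\emptyset'$. Symmetry of Information rewrites the hypothesis as $K(\sigma,n)\leq K(n)+c+O(1)$ for cofinitely many $n$, and weighting each admissible pair $(n,\rho)$ by $2^{-K^{\emptyset'}(n^*)-|\rho|}$ keeps total Kraft mass $\leq\sum_n 2^{-K^{\emptyset'}(n^*)}\leq 1$. The obstacle is that the cofinite quantifier makes the natural condition $\Pi^0_3$ rather than $\Sigma^0_2$, so the naive enumeration is not $\emptyset'$-c.e. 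The plan, as the abstract advertises, is to leverage the semilow property of $N_U$ (a $\Delta^0_2$ set of minimal descriptions) to obtain a $\Sigma^0_2$-accessible surrogate for the cofinite quantifier, and to use a refined analysis of the $\Delta^0_2$ complexity of $N_U$ to control the constant lost in Kraft mass so that $(\sigma,c+O(1))$ still enters the request. This is the step I expect to be the principal difficulty.
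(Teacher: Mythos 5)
Your treatment of the equality $\limsup_n K(\sigma\mid n^*)=^+\limsup_{\tau\in M_U}K(\sigma\mid\tau)$ and of the easy bound $\le^+ K^{\emptyset'}(\sigma)$ is correct; the first is essentially the content of the machine-independence lemma that opens \S\ref{main}, though you state it for a single $U$ only. The genuine gap is in the hard direction $K^{\emptyset'}(\sigma)\le^+\limsup_n K(\sigma\mid n^*)$, which is exactly where all of the paper's work lives, and your sketch contains a factual misstep there.

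You take it for granted that $N_U$ is semi-low, but the paper never claims this and it is far from obvious: by \Cref{complete} every infinite $\Delta^0_2$ subset of $M_U$ --- in particular $N_U$ itself --- Turing-computes $\emptyset'$, which already rules out the cheaper route of passing to a low infinite subset (the method that does work for plain $C$ in \Cref{hir}). What the paper actually does is \emph{construct}, by a priority argument (\Cref{prop:semi_low_existence}), a single universal prefix-free machine $U_0$ together with an infinite, semi-low $X\subsetneq N_{U_0}$; this construction, in which each $R_e$ strategy yo-yos between the directives \emph{meet} and \emph{avoid} so as to both hit $W_e$ when possible and respect a Kraft budget, is the paper's main technical contribution and your outline does not engage with it at all. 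Secondly, the Kraft--Chaitin mechanism you sketch (a priori weights $2^{-K^{\emptyset'}(n^*)-|\rho|}$) is not how the $\Pi^0_3$ cofinite quantifier is tamed. In \Cref{prop:semi-lowness_suffices} the paper instead builds a $\emptyset'$-enumerable request set $B$ by processing candidate pairs $(\sigma_m,s_m)$ one at a time and, at each step, querying the semi-low oracle for $X$ to decide whether committing to the pair would eventually push the weight along $X$ past the bound; semi-lowness is used precisely to make this question $\emptyset'$-decidable. Finally, since the constructed machine $U_0$ is not the arbitrary $U$ in the statement, you also need the cross-machine version of the machine-independence lemma to transfer the result; your present $(a)=(b)$ argument compares $N_U$ and $M_U$ only for a fixed $U$.
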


Notice that by rearranging Symmetry of Information, we obtain $K(\sigma\mid n^*) =^+ K(\sigma, n) - K(n)$.  Hence $K^{\emptyset'}(\sigma) =^+ \limsup_n [K(\sigma, n) - K(n)]$, giving a definition of $K^{\emptyset'}$ purely in terms of $K$ without relativization.

By relativizing \Cref{thm:motivation}, we obtain 
\[
K^{A'}(\sigma) =^+ \limsup_n K^A(\sigma \mid n)\footnote{Indeed 
$=^+\limsup_n K(\sigma \mid A\uh n)$.},
\]
for all $A$.  By appropriately iterating and relativizing \Cref{thm:main_result}, we obtain a definition of $K^{\emptyset^{(n)}}$ for all $n \in \omega$, e.g.,
\begin{align*}
K^{\emptyset^{(2)}}(\sigma) &=^+ \limsup_n [K^{\emptyset'}(\sigma, n) - K^{\emptyset'}(n)]\\
&=^+ \limsup_n \left(\limsup_m [K(\sigma, n, m) - K(m)] - \limsup_m [K(n,m) - K(m)]\right).
\end{align*}
From this follows a definition of $n$-randomness purely in terms of unrelativized $K$.

\subsection{The complexity of the sets $M_U$ and $N_U$}
The question arises how should we prove 
the $\limsup_n K(\sigma \mid n^*)$ theorem? The answer really comes from understanding the 
behaviour of the set of minimal descriptions.  
One hint came from unpublished work of Hirschfeldt:

\begin{thm}[Hirschfeldt, unpubl.]
\label{hir}
$C^{\emptyset'}(\sigma)=^+ \limsup_n C(\sigma \mid n^*_C)$.
\end{thm}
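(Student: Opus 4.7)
The plan splits into the two inequalities of the $=^+$.

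For the upper bound, $\limsup_n C(\sigma \mid n^*_C) \leq^+ C^{\emptyset'}(\sigma)$: since $n = U(n^*_C)$ is computable from $n^*_C$, we have $C(\sigma \mid n^*_C) \leq^+ C(\sigma \mid n)$ for every $n$, and taking $\limsup$ together with the plain-complexity form of \Cref{thm:motivation} (noted immediately after its statement) yields the bound at once.

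For the lower bound, $\limsup_n C(\sigma \mid n^*_C) \geq^+ C^{\emptyset'}(\sigma)$, the plan has two moves. First, I would invoke the tight (no-logarithmic-error) Symmetry of Information for plain complexity, $C(\sigma, n) =^+ C(n) + C(\sigma \mid n^*_C)$, which is exact precisely because we condition on a minimal description; this reformulates the goal as $\limsup_n \bigl[C(\sigma, n) - C(n)\bigr] \geq^+ C^{\emptyset'}(\sigma)$. Second, I would construct a universal $\emptyset'$-machine that compresses every element of
\[
A_L = \{\sigma : \forall^\infty n,\ C(\sigma \mid n^*_C) \leq L\}
\]
to length $L + O(1)$, yielding $C^{\emptyset'}(\sigma) \leq^+ L$ for $\sigma \in A_L$ and hence the desired lower bound. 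The counting bound $|A_L| \leq 2^{L+1}$ is easy: for each fixed $n$ at most $2^{L+1}$ strings $\sigma$ satisfy $C(\sigma \mid n^*_C) \leq L$, and $A_L$ is contained in this slice once $n$ is large enough, leaving room for length-$(L+1)$ codes.

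The main obstacle is the descriptive complexity of $A_L$. A priori, membership in $A_L$ unfolds as $\exists N\, \forall n \geq N\, P(n, \sigma, L)$ with $P$ a $\Delta_2^0$ predicate — one uses $\emptyset'$ to identify $n^*_C$ and then runs a $\Sigma_1^0$ search for a compressing program — which places $A_L$ only in $\Sigma_3^0$. A naive enumeration therefore uses $\emptyset''$ and delivers only $C^{\emptyset''}(\sigma) \leq^+ L$, which is too weak. To obtain $C^{\emptyset'}$ one must instead produce a $\Sigma_1^{\emptyset'}$ superset of $A_L$ of comparable cardinality, exploiting the controlled dynamics of the $\Delta_2^0$ approximations $n^*_{C,s} \to n^*_C$. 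This finer analysis of minimal descriptions is the real content of the lower bound, and is the plain-complexity analog of the minimal-description analysis the authors carry out for \Cref{thm:main_result}.
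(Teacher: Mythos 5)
Your upper bound direction is correct and essentially matches what the paper does there: since $n = V(n^*_C)$ is uniformly computable from $n^*_C$, we get $C(\sigma\mid n^*_C) \le^+ C(\sigma\mid n)$ for every $n$, and the limsup bound follows from \cref{thm:motivation} for $C$.

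For the lower bound, you have correctly identified the central obstacle — the descriptive complexity of the set of $\sigma$ with small $\limsup_n C(\sigma\mid n^*_C)$ — but you do not close the gap. You say that one must ``produce a $\Sigma^{\emptyset'}_1$ superset of $A_L$ of comparable cardinality'' by ``exploiting the controlled dynamics of the $\Delta_2^0$ approximations,'' but this is precisely the nontrivial content, and it is left entirely to the reader. Without it the argument only yields $C^{\emptyset''}(\sigma) \le^+ L$, as you note, which is strictly weaker than what is claimed. In addition, your opening move invokes an exact ($=^+$) Symmetry of Information for plain complexity, $C(\sigma,n) =^+ C(n) + C(\sigma\mid n^*_C)$. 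This is not correct: even conditioning on $n^*_C$, plain complexity cannot delimit the concatenation $n^*_C\,q$ without additional side information, so the usual $O(\log)$ error term persists. The paper carefully states the rearranged-Symmetry formula only for $K$ (where Levin--G\'acs gives the exact form) and never for $C$; it is not an accident that they do not write $C^{\emptyset'}(\sigma) =^+ \limsup_n[C(\sigma,n)-C(n)]$. That said, this misstep is tangential since your enumeration of $A_L$ does not actually use it.

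The paper's argument sidesteps the arithmetic-complexity issue entirely with a different device. It builds a bounded $\Pi^0_1$ class of ``hard'' witnesses $(m_0,m_1,\dots)$ with $C(m_n) \ge n$, applies the Low Basis Theorem to get a low path $L$, and from $L$ extracts an $L$-c.e.\ infinite set $X \subseteq N_V$ of minimal plain descriptions. Because $L$ is low, $C^{\emptyset'}(\sigma) =^+ C^{L'}(\sigma) =^+ \limsup_i C^L(\sigma\mid i)$ by the relativized \cref{thm:motivation}; passing $L$-effectively between $i$ and the enumeration $\rho_i$ of $X$ and dropping the $L$-oracle then gives $C^{\emptyset'}(\sigma) \le^+ \limsup_{\rho\in X}C(\sigma\mid\rho) \le \limsup_n C(\sigma\mid n^*_C)$. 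The point you should internalize is that this low-subset trick is \emph{specific to plain complexity}: by \cref{complete}, for prefix-free $U$ every infinite $\Delta^0_2$ subset of $M_U$ computes $\emptyset'$, so no low subset exists and a genuinely new idea (semi-lowness, \cref{semi,main}) is needed for the $K$ analogue. Your counting/enumeration plan would, if completed, amount to re-proving something of that strength for $C$, which is more than this theorem requires.
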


We will prove this result in \S \ref{denis}. The method is 
to construct an infinite low subset of $N_V$, for $V$ the machine generating $C$, and use some relativization tricks. 

We had hoped to use this method for $K$, but unfortunately we 
were able to prove a result saying that this is impossible.

\begin{thm}
\label{complete}
Let $U$ be a universal prefix-free machine, and let $S$ be an infinite $\Delta_2^0$ subset of $M_U$. Then $\emptyset'\le_T S$.
\end{thm}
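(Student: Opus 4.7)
The plan is to show $\emptyset' \le_T S$ by extracting, from the $S$-oracle, an $S$-computable upper bound on the halting stage of $\varphi_e(e)$ for each index $e$.  The bound will be a $K$-stabilization time of a minimal description drawn from $S$, and its validity will come from a Kraft--Chaitin construction that exploits the universality of $U$.

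Fix a non-increasing computable approximation $K_s \to K$.  For $\rho \in M_U$ set $s^*(\rho) = \min\{s : K_s(U(\rho)) = |\rho|\}$; because we know the exact equality $|\rho| = K(U(\rho))$ for $\rho \in M_U$, this is a total function, and $s^* \uh S$ is $S$-computable (given $\rho \in S$, simulate $K_s$ until it reaches the known value $|\rho|$).  For each $e$ we construct, uniformly in $e$, a c.e.\ prefix-free machine $V_e$ with universality constant $c_e$ in $U$: $V_e$ waits for $\varphi_e(e)\converge$, and if this first happens at stage $t$, $V_e$ enumerates Kraft--Chaitin axioms $(|\rho|-c_e-1,\ U(\rho))$ for each $\rho \in M_U[t]$ of length above a threshold $L_e$ (subject to the Kraft budget).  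By universality, each such axiom yields a $U$-description of $U(\rho)$ of length $|\rho|-1$, so $K(U(\rho)) \le |\rho|-1$; if $\rho \in M_U$, this contradicts $|\rho| = K(U(\rho))$.  Hence, for any $\rho \in M_U$ with $|\rho|>L_e$ that $V_e$ targets, $\varphi_e(e)\converge$ at stage $t$ forces $\rho \notin M_U[t]$, equivalently $s^*(\rho)>t$.

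The reduction then is: given $e$, use the $S$-oracle to find some $\rho \in S$ with $|\rho|>L_e$ that $V_e$ targets, compute $s^*(\rho)$, and declare $e \in \emptyset'$ iff $\varphi_{e,\,s^*(\rho)}(e)\converge$.  Correctness is immediate: if $\varphi_e(e)\converge$ at stage $t$ then $t < s^*(\rho)$ so the predicate holds, while if $\varphi_e(e)\diverge$ the predicate fails automatically.  The existence of such a $\rho$ is guaranteed because $S$ is infinite and prefix-free, hence of unbounded length, and $V_e$ can be arranged to target cofinitely many eligible minimal descriptions.

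The main technical obstacle is the Kraft accounting: to target all $\rho \in M_U[t]$ with $|\rho|>L_e$ one needs $\sum_{|\rho|>L_e} 2^{-|\rho|} \le 2^{-c_e-1}$, a tail bound on the halting probability that is not uniformly computable in $e$, so $L_e$ cannot be read off directly.  The construction must therefore be adaptive --- issuing axioms within a running Kraft budget and skipping those that would overflow --- while still ensuring that some $\rho \in S$ is eventually among $V_e$'s targets.  This is, we expect, where the novel semilow-set analysis referenced in the abstract enters, using the $\Delta_2^0$-approximation $S_s$ together with the structure of $M_U$ to certify that the right elements of $S$ are handled by $V_e$.
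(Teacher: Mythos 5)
Your overall strategy is a reasonable guess, but the obstacle you flag at the end is genuine, and your proposed repair does not close it. To conclude $\rho \notin M_U[t]$ for whatever $\rho \in S$ the oracle hands you, $V_e$ must have pushed \emph{every} $\rho \in M_U[t]$ above the threshold $L_e$ out of $M_U$; if the construction skips some targets to protect the Kraft budget, those skipped strings are precisely the ones for which the certificate $s^*(\rho) > t$ can fail, and the $S$-search has no way to steer around them. Since the budget constraint requires $\sum_{|\rho| > L_e,\ \rho \in \mathrm{dom}(U)} 2^{-|\rho|} \le 2^{-c_e-1}$, a tail of the halting probability, no computable choice of $L_e$ works, so a fixed length cut simply cannot serve as the budget control. (There is also a small slip: $s^*(\rho)$ should be the least $s$ with $\rho \in M_U[s]$, i.e.\ it must also wait for $U_s(\rho)\converge$, otherwise $s^*(\rho) \le t$ does not entail $\rho \in M_U[t]$.) Your attribution of the fix to semilowness is also misplaced: in the paper, semilowness enters in \Cref{prop:semi-lowness_suffices} and \Cref{prop:semi_low_existence}, in service of \Cref{thm:main_result}, not of \Cref{complete}; indeed \Cref{complete} is the \emph{obstruction} that rules out low $\Delta^0_2$ subsets of $M_U$ and thereby motivates the retreat to semilowness elsewhere.

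The device the paper uses to make the punished weight computably bounded is a dynamic $k$-colouring of $\mathrm{dom}(U)$, one per $n$, with $1/k < 2^{-(n+2)}\epsilon$. A colour is \emph{small} if its class has weight at most $1/k$; pigeonhole guarantees some colour is always small. The colouring actively dodges the $\Delta^0_2$ approximation $S_s$: a string entering $\mathrm{dom}(U)$ receives, among the currently small colours $i$, the one for which the least length of an $S_s$-string coloured $i$ is largest (infinite if none). One then proves that $S$ in fact meets \emph{every} ultimately small colour: otherwise, once the approximation to $S$ has settled on short strings and the large colours have revealed themselves, every new string entering $\mathrm{dom}(U)$ is assigned a small colour that $S$ misses, and when one such new string belongs to $S$ --- which must happen since $S$ is infinite --- we get a contradiction. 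When $n$ enters $\emptyset'$ at stage $s+1$, the construction evicts from $M_U$ all strings of one colour that was small at stage $s$, at cost at most $2/k < 2^{-(n+1)}\epsilon$; this replaces your uncomputable length cut by an always-small budget. The reduction from $\emptyset'$ to $S$ then searches for a stage $t$ at which $S$ has hit every colour still small at $t$, and outputs $\emptyset'_t(n)$; the punished colour remains small and unmet by $S$ through stage $s$, forcing $t > s$. This colouring and the accompanying ``$S$ meets every small colour'' lemma are the combinatorial content your length-threshold sketch is missing.
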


Note that $M_U$ and $N_U$ are of degree $\*{0}'$ e.g.\ ~\cite{DH}.
For example, $N_U\ge_{wtt} \Omega$ by the Coding Theorem (\cite{DH}, \S 3.9), and 
$\Omega$ is wtt-complete.
   In view of Theorem \ref{complete},
it would seem reasonable to suggest that they
are introreducible, which in this context would mean that every infinite subset
computes $\emptyset'$.
However, Joeseph Miller proved that this is not the case. We include this
also in \S \ref{colour}.

We remark that the proof can also be adapted to show that 
an infinite $\Delta^0_2$ hitting set for a Solovay function\footnote{That is, 
a computable $F$
such that $F(\sigma)\ge^+ K(\sigma)$ for all $\sigma$,
and $F(\sigma)=^+K(\sigma)$ for infinitely many $\sigma$.}
must also be Turing complete.
We prove these results, which are on independent interest,
in \S \ref{colour}.

In the end we found a way around these problems using an 
idea from studies in the automorphism group of the lattice of 
computably enumerable sets (Soare \cite{Soa}.)
We recall that a set $S$ is called 
\emph{semi-low} iff 
\[
\{e\mid S\cap W_e \ne \emptyset\}\le_T \emptyset',
\]
i.e., a pointwise version of being low.
This notion was introduced as a method towards characterizing when 
the lattice of supersets of a c.e.\ set was isomorphic to 
the lattice of all c.e.\ sets.
In \S \ref{main}, we will show that, although there cannot be an infinite low 
subset of $M_U$ or $N_U$, there \emph{can} be a semi-low one.
In \S \ref{semi} we will prove that this is enough for our 
main result.
There have been other uses of semi-lowness outside of the lattice of c.e.\ sets,
such as Downey and Melnikov \cite{DM} in the study of abelian groups,
but these seem sporadic at best.

\subsection{The machine existence theorem}

As we will be using it several times, we recall the machine existence theorem and fix our notation surrounding it.

\begin{dfn}
For a set $A \subseteq 2^{<\omega} \times \omega$, $\={wt}(A) = \sum_{(\sigma, s) \in A} 2^{-s}$.
\end{dfn}

\begin{thm}
[KC Theorem or The Machine Existence Theorem~see Downey and Hirschfeldt \cite{DH}, \S 3.6]
If $A \subseteq 2^{<\omega}\times \omega$ is c.e.\ and has $\={wt}(A) \le 1$, then there is a prefix-free machine $V$ such that for every $(\sigma, s) \in A$, there is a $\rho$ with $|\rho| = s$ and $V(\rho) = \sigma$.  Further, an index for $V$ can be effectively obtained from a c.e.\ index for $A$.
\end{thm}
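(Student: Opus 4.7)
The plan is to process the requests $(\sigma_i, s_i)$ one at a time as $A$ is enumerated, and at each stage $i$ to carve a string $\rho_i$ of length exactly $s_i$ out of a dynamically maintained prefix-free pool $P_i \subseteq 2^{<\omega}$ of unused strings, defining $V(\rho_i) = \sigma_i$. Starting from $P_0 = \{\varepsilon\}$, I would maintain the invariant that $P_i \cup \{\rho_0, \dots, \rho_{i-1}\}$ is prefix-free and, crucially, that the elements of $P_i$ have pairwise distinct lengths.

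The distinct-lengths condition is the bookkeeping trick that makes the construction go through. At stage $i$ the pool has weight $\sum_{\tau \in P_i} 2^{-|\tau|} = 1 - \sum_{j<i} 2^{-s_j} \ge 2^{-s_i}$, since $\={wt}(A) \le 1$. But a finite prefix-free set whose strings all have pairwise distinct lengths strictly greater than $s_i$ contributes weight strictly less than $\sum_{k>s_i} 2^{-k} = 2^{-s_i}$, so some $\tau \in P_i$ must satisfy $|\tau| \le s_i$. Picking the longest such $\tau$, I set $\rho_i = \tau 0^{s_i - |\tau|}$ and replace $\tau$ in the pool by the sibling strings $\tau 0^{k - |\tau| - 1} 1$ for $|\tau| < k \le s_i$. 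The new strings have lengths $|\tau|+1, \dots, s_i$, and by the maximality of $|\tau|$ every other pool element has length either $< |\tau|$ or $> s_i$, so $P_{i+1}$ again has pairwise distinct lengths; the weight accounting is immediate.

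The remaining points are routine: prefix-freeness of $\{\rho_i : i \in \omega\}$ follows directly from the invariant, and since each stage is a finite, uniformly computable update to the pool, an index for the resulting prefix-free machine $V$ is obtained effectively from a c.e.\ index for $A$. The substantive content sits entirely in the distinct-lengths invariant; without it the available weight in $P_i$ could in principle be concentrated on strings all strictly longer than $s_i$, and we would be unable to extract a description of length exactly $s_i$. This is therefore the step I expect to require the most care in a full write-up, although, once phrased as above, it reduces to the one-line weight estimate.
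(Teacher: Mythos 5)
Your proof is correct and is essentially the standard argument cited by the paper (Downey and Hirschfeldt, Theorem 3.6.1): maintain a finite prefix-free pool of free strings with pairwise distinct lengths, observe that the weight bound together with the distinct-lengths invariant guarantees a pool element of length at most $s_i$, and carve off $\rho_i$ while splitting the remainder of the cone back into the pool. The distinct-lengths bookkeeping is exactly the trick the cited proof uses.
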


\begin{cor}\label{cor:finite_request_sets}
If $A \subseteq 2^{<\omega}\times \omega$ is c.e.\ with $\={wt}(A) < \infty$, then for all $(\sigma, s) \in A$, $K(\sigma) \le^+ s$.
\end{cor}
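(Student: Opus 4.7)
The plan is a straightforward scaling reduction to the KC Theorem. Since $\mathrm{wt}(A) < \infty$, there is some $c \in \omega$ with $\mathrm{wt}(A) \le 2^c$. Define the c.e.\ set
\[
A' = \{(\sigma, s+c) : (\sigma, s) \in A\} \subseteq 2^{<\omega} \times \omega,
\]
whose weight is $2^{-c}\,\mathrm{wt}(A) \le 1$. Apply the Machine Existence Theorem to $A'$ to obtain a prefix-free machine $V$ such that for every $(\sigma, s) \in A$, there exists $\rho$ with $|\rho| = s + c$ and $V(\rho) = \sigma$; in particular $K_V(\sigma) \le s + c$.

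Now invoke the universality of the reference machine $U$ underlying $K$: there is a constant $d_V$ (depending only on $V$, which is itself fixed once $A$ is fixed) such that $K(\tau) \le K_V(\tau) + d_V$ for all $\tau$. Combining the two bounds yields $K(\sigma) \le s + c + d_V$ for every $(\sigma, s) \in A$, which is precisely $K(\sigma) \le^+ s$ with additive constant $c + d_V$ depending on $A$.

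There is essentially no obstacle: the only conceptual point worth flagging is that the constant $c$ need not be obtained effectively from an index for $A$, since the conclusion $K(\sigma) \le^+ s$ only asserts the existence of an additive constant for this particular $A$. If an effective version were needed, one would instead require an explicit computable upper bound on $\mathrm{wt}(A)$ as an additional hypothesis.
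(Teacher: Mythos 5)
Your proof is correct and is the standard argument the paper implicitly relies on (the paper states this corollary without proof, immediately after the Machine Existence Theorem). The scaling by $2^{-c}$ to bring the weight under $1$, followed by applying the KC Theorem and then invoking universality of the reference machine, is exactly the expected route, and your remark about the non-effectivity of $c$ being harmless (since $\le^+$ only asserts existence of a constant) correctly addresses the one subtlety.
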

Sets $A$ of this form are sometimes called {\em request sets}.

The following is more of a proof technique, but we will state it as a corollary.
\begin{cor}\label{cor:we_control_part_of_U}
As part of a uniform construction, we may effectively obtain an $\epsilon > 0$ such that if we enumerate $A \subseteq 2^{<\omega} \times \omega$ with $\={wt}(A) \le \epsilon$, then $K(\sigma) \le s$ for every $(\sigma,s) \in A$ (observe the lack of additive constant).
\end{cor}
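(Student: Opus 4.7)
The plan is to absorb the additive coding constant of the Machine Existence Theorem by paying a multiplicative factor in the weight bound. First I would observe that, in the envisioned uniform construction, the enumeration procedure for $A$ has an effectively computable index; the Machine Existence Theorem then produces an index for a prefix-free machine $V$ from which one effectively computes a coding constant $c$ witnessing $K(\sigma) \le K_V(\sigma) + c$ for all $\sigma$. By the Recursion Theorem, I may fix such a $c$ in advance of the construction, and I would then set $\epsilon = 2^{-c}$.

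Given this $\epsilon$, the rest is bookkeeping. Whenever a pair $(\sigma, s)$ is enumerated into $A$, I would enumerate $(\sigma, s - c)$ into a shifted request set $A'$. Two checks are needed: first, every $(\sigma, s) \in A$ satisfies $2^{-s} \le \={wt}(A) \le 2^{-c}$, hence $s \ge c$, so that $s - c$ is a legitimate non-negative length; second,
\[
\={wt}(A') \;=\; \sum_{(\sigma,s) \in A} 2^{-(s-c)} \;=\; 2^c \cdot \={wt}(A) \;\le\; 1,
\]
so the KC Theorem applies to $A'$. The resulting prefix-free machine $V$ witnesses $K_V(\sigma) \le s - c$ for each $(\sigma, s) \in A$, and composing with the coding of $V$ into the universal machine yields $K(\sigma) \le K_V(\sigma) + c \le s$, as required.

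The main obstacle is making the appeal to the Recursion Theorem clean inside the informal phrase ``uniform construction''. The point is that the function sending an index for an enumeration of $A'$ (parametrized by a candidate value of $c$) to the actual coding constant of the machine produced from it is computable; the Recursion Theorem supplies a value of $c$ at least as large as the coding constant of the machine we in fact build using that same $c$, which is all that the argument requires. Once this is in place, the proof is essentially a one-line rescaling of the KC Theorem.
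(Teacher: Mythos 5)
Your proposal is correct and is essentially the paper's argument: use the Recursion Theorem to know the coding constant $c$ (the paper uses the prefix $\rho$ with $U(\rho\tau)=V(\tau)$, so $c=|\rho|$) in advance, set $\epsilon = 2^{-c}$, and shift each incoming request $(\sigma,s)$ to $(\sigma,s-c)$. The only cosmetic difference is that the paper also adds the safety clause of refusing any shifted request that would push the auxiliary set's weight over $1$, which keeps $V$ a legitimate prefix-free machine even if the caller violates the promise $\={wt}(A)\le\epsilon$; your extra observation that $s\ge c$ under the promised weight bound is a nice way to see the shifted lengths are nonnegative, though it is not needed once that safety clause is in place.
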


\begin{proof}
 Fix $U$ the universal prefix-free machine used to define $K$. We will generate an auxiliary c.e.\ set $B$ with $\={wt}(B) \le 1$.  By the Recursion Theorem, we know a c.e.\ index for the set $B$ we will enumerate.  By the Machine Existence Theorem, this effectively gives us an index for a prefix-free machine $V$. From this we effectively obtain a string $\rho$ with $U(\rho\tau) = V(\tau)$ for all $\tau$, and so $K(\sigma) \le t + |\rho|$ for all $(\sigma, t) \in B$.
 
 Set $\epsilon = 2^{-|\rho|}$, and define $B$ by enumerating $(\sigma, s-|\rho|)$ whenever $A$ enumerates $(\sigma, s)$, provided this enumeration does not put $\={wt}(B)$ over 1.  If $\={wt}(A) \le \epsilon$, then $(\sigma, s-|\rho|)$ is enumerated into $B$ for every pair $(\sigma, s) \in A$, and so $K(\sigma) \le (s-|\rho|) + |\rho| = s$, as desired.
\end{proof}

\section{Hirschfeldt's Theorem}
\label{denis}

We prove \cref{hir}.
We will prove that 
\[
C^{\emptyset'}(\sigma)=\limsup_n C(\sigma \mid n^*_C).
\]
Consider the $\Pi_1^0$ class
of sequences 
\[
P=\{(m_0, m_1, \dots) \mid \forall n\, [2^n \le m < 2^{n+1} \wedge C(m_n) \ge n]\}.
\]
A simple counting argument shows that there is an appropriate $m_n$ for every $n$, and so $P$ is nonempty. Since there are only $2^n$ options for $m_n$, $P \subseteq 2^\omega$ under an appropriate effective identification.  So there is a low infinite path $L = (m_0, m_1, \dots)$ by the Low Basis Theorem.

Recall that for $2^n \le m < 2^{n+1}$, $C(m) \le^+ n$, so fix the least $d$ such that $\exists^\infty n\, C(m_n) = n+d$, and fix an $N$ such that $C(m_n) \ge n+d$ for all $n \ge N$.  Then
\[
X = \{ (m_n)^*_C : n \ge N \wedge C(m_n) = n+d\}
\]
is $L$-c.e.\ and an infinite subset of $N_V$, where $V$ is the universal machine defining $C$.  Fix $(\rho_i)_{i \in \omega}$ an $L$-computable enumeration of $X$.

Then by relativizing \Cref{thm:motivation},
we have
\[
C^{\emptyset'}(\sigma) =^+ C^{L'}(\sigma) =^+ \limsup_i C^L (\sigma \mid i).
\]
Note that we can $L$-effectively pass between $i$ and $\rho_i$, so $C^L(\sigma\mid i) =^+ C^L(\sigma\mid\rho_i)$, giving
\[
C^{\emptyset'}(\sigma) =^+ \limsup_{\rho \in X} C^L(\sigma\mid \rho).
\]
We also have
\[
\limsup_{\rho \in X} C^L(\sigma\mid \rho) \le^+ \limsup_{\rho \in X}C(\sigma\mid\rho),
\]
since oracles can only help;
\[
\limsup_{\rho \in X}C(\sigma\mid\rho) \le \limsup_{n^*_C} C(\sigma\mid n^*_C) \le \limsup_{\rho \in M_V} C(\sigma\mid \rho) \le \limsup_{\rho \in 2^{<\omega}}C(\sigma\mid \rho),
\]
since $X \subseteq N_V \subseteq M_V \subseteq 2^{<\omega}$, and limit supremums over larger sets are larger; and finally
\[
\limsup_{\rho \in 2^{<\omega}}C(\sigma\mid \rho) =^+ C^{\emptyset'}(\sigma),
\]
by the unrelativized version of \Cref{thm:motivation}, after an effective identification of $2^{<\omega}$ with $\omega$.  Picking out the relevant bits, we see that 
\[
C^{\emptyset'}(\sigma)=^+ \limsup_n C(\sigma\mid n^*_C) =^+ \limsup_{\rho \in M_V} C(\sigma\mid \rho).
\]

\section{No low hitting sets}
\label{colour}

We prove Theorem \ref{complete}.  Fix a universal prefix-free machine $U$, and suppose that $X=\lim_s X_s$ is an infinite $\Delta_2^0$ subset of $M_U$.  Fix $\epsilon$ as in \cref{cor:we_control_part_of_U}; we will enumerate an appropriate set $A \subseteq 2^{<\omega} \times \omega$.  

We describe how we code whether $n\in \emptyset'$.  Fix $k \in \omega$ with 
\[
\frac1k < 2^{-(n+2)}\epsilon.
\]
To do this coding, we will define a $k$-colouring $\chi$ on $\={dom}(U)$.  This colouring will be unique to $n$; the colourings for other values of $n$ will have no interaction.

We declare that colour $i$ is \emph{small} if
\[
\sum_{\chi(\sigma)=i}2^{-|\sigma|}\le \frac{1}{k}.
\]
This has natural approximations: at a stage $s$, based on the finitely many strings we have so far coloured, a colour may still be small or may have already proven itself to be large.  Note that since colours are disjoint, and we have $k$ colours, there is always at least one small colour.

Suppose that $\sigma$ enters $\mbox{dom}(U)$ at stage $s$.
Let
\[
r_i=\min\{|\tau| \mid 
\chi(\tau)=i \land \tau\in X_s\}.
\]
We regard this as infinite if there is no such $\tau$.
Then amongst the $i$ which are small at stage $s$, we fix a $j$ maximizing $r_j$ and colour $\chi(\sigma) = j$.

Suppose that $n$ enters $\emptyset'$ at some stage $s+1$.  Fix a single colour $j$ which was small at stage $s$; we invalidate all the strings which had colour $j$ at stage $s$.  That is, for every $\sigma \in \={dom}(U)[s]$ with $\chi(\sigma) = j$, we enumerate $(U(\sigma), |\sigma|-1)$ into $A$.  Provided $\={wt}(A) \le \epsilon$, this will ensure that $\sigma \not \in M_U$ for each such $\sigma$.

As $j$ was small at stage $s$, the weight of these pairs is at most $\frac2k < 2^{-(n+1)}\epsilon$.  Thus, summing over the strategies for every $n$, $\={wt}(A) \le \sum_n 2^{-(n+1)}\epsilon = \epsilon$, as required.

\begin{claim} $X$ has members of every small colour.
\end{claim}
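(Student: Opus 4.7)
The plan is to argue by contradiction. Let $S$ denote the (finite) set of colours that are small in the limit, and let $I\subseteq S$ be the set of those small colours containing no element of $X$. Assume $I\ne\emptyset$; I will show that all sufficiently late colouring decisions are forced into $I$, and then derive a contradiction from the infinitude of $X$.

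The crux is a two-sided asymptotic analysis of $r_i$ as a function of the current stage $s$. For each $i\in S\setminus I$, fix $\tau_i$ a shortest colour-$i$ member of $X$. Since $\tau_i\in X=\lim_s X_s$, we have $\tau_i\in X_s$ for all sufficiently large $s$, so the stage-$s$ value of $r_i$ is at most $|\tau_i|$ eventually; letting $R$ be the maximum of $|\tau_i|$ over the finitely many $i\in S\setminus I$, we obtain $r_i\le R$ for every $i\in S\setminus I$ from some stage on. For each $j\in I$, by contrast, I claim that $r_j\to\infty$ along the stages. Given $L$, the collection of colour-$j$ strings of length at most $L$ is finite (bounded by $2^{L+1}-1$); none of them lies in $X$, so each leaves $X_s$ after some finite stage, and past the maximum such stage we have $r_j>L$ at every subsequent stage. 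Since $I$ is finite, there is a single stage $s^{**}$ beyond which $r_j>R$ for every $j\in I$ simultaneously; moreover, by monotonicity of the partial weights, the colours small at stage $s$ coincide with $S$ once $s\ge s^{**}$.

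With these estimates, the colouring rule at any stage $s\ge s^{**}$ picks a small-at-$s$ colour maximising $r_{\cdot}$, which by the inequalities $r_i\le R < r_j$ (for $i\in S\setminus I$ and $j\in I$) must lie in $I$. Hence every string entering $\={dom}(U)$ at a stage after $s^{**}$ receives an $I$-colour. But the set $\={dom}(U)$ enumerated by stage $s^{**}$ is finite, while $X$ is infinite, so some $\tau\in X$ first appears in $\={dom}(U)$ at a stage later than $s^{**}$; that $\tau$ is then assigned a colour in $I$, contradicting the defining property of $I$.

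The main obstacle I anticipate is the claim $r_j\to\infty$ for $j\in I$: although colour $j$ may accumulate infinitely many coloured strings across all stages, one must argue that the $\Delta^0_2$-approximation cannot keep short colour-$j$ strings present in $X_s$ at arbitrarily late stages. This depends on combining the finiteness of the set of strings of length at most $L$ with the fact that each fixed string not in $X$ eventually leaves the approximation permanently; without the latter a stream of ever-fresh short colour-$j$ strings could in principle keep $r_j$ bounded and break the argument.
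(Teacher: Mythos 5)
Your proof is correct and takes essentially the same route as the paper's: both fix a length bound for the small colours that do occur in $X$, wait until the $\Delta^0_2$ approximation has stabilized on all strings up to that length (and all large colours have declared themselves), and then observe that any member of $X$ entering $\={dom}(U)$ after that stage is forced by the maximizing rule to receive a small colour that does not occur in $X$, a contradiction. Your explicit argument that $r_j\to\infty$ for $j\in I$ is just a slightly more verbose rendering of the paper's observation that $r_i>n$ for small $i\notin A$ once $X_s$ has converged on strings of length at most $n$.
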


\begin{proof}
 Fix $A$ the set of small colours which occur in $X$, and suppose this is not all the small colours.
 
 Fix a length $n$ such that every colour in $A$ occurs on a string $\tau \in X$ with $|\tau| \le n$, and fix $t$ sufficiently large such that $X$ has converged on strings of length at most $n$ by stage $t$, i.e., if $|\tau| \le n$, then for all $s \ge t$, $X_s(\tau) = X(\tau)$.  Suppose also that $t$ is large enough such that every large colour has proven itself large by stage $t$.
 
 Since $X$ is infinite, it contains some $\tau$ which enters $\={dom}(U)$ at some stage $s > t$.  At stage $s$, $r_i \le n$ for every $i \in A$, whereas $r_i > n$ for every small colour $i \not \in A$.  So $\chi(\tau)$ will be a small colour not in $A$, contrary to choice of $A$.
\end{proof}
We can now state our procedure for computing $\emptyset'(n)$ from $X$: in the colouring for $n$, search for a stage $t$ such that for every colour $i$ which is still small at stage $t$, some element of $X$ has been given colour $i$ by stage $t$; then output $\emptyset'_t(n)$.

As just argued, there is eventually some stage at which $X$ intersects every small colour, so this algorithm is total.  Suppose first that $n \not \in \emptyset'$.  Then certainly $\emptyset'_t(n) = 0$, as desired.

Suppose instead that $n \in \emptyset'$, and fix the stage $s+1$ at which it enters.  Fix the chosen colour $j$.  Then no $\sigma$ which received colour $j$ at or before stage $s$ belongs to $M_U$, and so cannot belong to $X$.  Since colour $j$ is still small by stage $s$, $j$ witnesses $t \not \le s$, giving $\emptyset'_t(n) = 1$, as desired.

This concludes the proof of \cref{complete}.

\smallskip

The same method can be used to prove:

\begin{cor} \label{corc}
Suppose that $X$ is an infinite $\Delta_2^0$ set of 
hitting points for a Solovay function $F$. That is, a set $S$ of 
points $n$ where $F(n)=^+ K(n)$.
Then $\emptyset'\le_T S$.
\end{cor}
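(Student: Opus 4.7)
The plan is to mirror the proof of \cref{complete}, replacing the invalidation of a string $\sigma \in M_U$ (by forcing $K(U(\sigma)) < |\sigma|$) with the invalidation of a number $m \in X$ (by forcing $K(m) < F(m) - c$), where $c$ is a constant with $F(m) \le K(m) + c$ for every $m \in X$. Enumerating $(m, F(m)-c-1)$ into a c.e.\ request set forces $K(m) \le F(m)-c-1$ and thereby ejects $m$ from $X$. Since $F \ge^+ K$ yields $\sum_m 2^{-F(m)} < \infty$, we may replace $F$ by $F + d$ for large enough $d$---still a Solovay function, still with hitting set $X$ up to a new constant---to assume $\sum_m 2^{-F(m)} \le 1/2$.

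Fix $\epsilon > 0$ via \cref{cor:we_control_part_of_U} and set $k_n = \lceil 2^{n+c+2}/\epsilon \rceil$. For each $n$, build a $k_n$-colouring $\chi^n$ of $\omega$ stage by stage. Writing $W_j^n(m) = \sum_{l < m,\, \chi^n(l)=j} 2^{-F(l)}$, call $j$ \emph{small at stage $m$} if $W_j^n(m) \le 1/k_n$. At stage $m$, if some colour $j$ satisfies $W_j^n(m) + 2^{-F(m)} \le 1/k_n$, set $\chi^n(m) = j$ for such a $j$ maximizing
\[
r_j^n(m) = \min\{l < m : \chi^n(l) = j \wedge l \in X_m\}
\]
(with $\min \emptyset = \infty$); otherwise leave $m$ uncoloured. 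Since $\{m : F(m) \le L\}$ is finite for each $L$ (as $F \ge^+ K$), only finitely many $m$ are left uncoloured, and at every stage at least $k_n/2$ colours are small (because the total weight is at most $1/2$). When $n$ enters $\emptyset'$ at stage $s+1$, fix a colour $j$ small at stage $s+1$ and enumerate $(m, F(m)-c-1)$ into $A$ for every $m \le s$ with $\chi^n(m) = j$. The weight contributed is $2^{c+1} W_j^n(s+1) \le 2^{c+1}/k_n \le 2^{-(n+1)}\epsilon$, so $\={wt}(A) \le \epsilon$ in total.

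The heart of the argument is the claim that $X$ meets every ultimately-small colour, and its proof is verbatim from \cref{complete}: if some ultimately-small colour has no $X$-witness, choose $N$ bounding the $X$-witnesses of those ultimately-small colours that are hit, choose $t > N$ past both the convergence of $X$ on $[0,N]$ and the revelation of every ultimately-large colour, and pick $\tau \in X$ with $\tau > t$ large enough to be coloured; then $\chi^n(\tau)$ is forced into a small colour having no $X$-witness, contradicting $\tau \in X$. The reduction computing $\emptyset'(n)$ from $X$ is then unchanged: search (with oracle $X$) for a stage $t$ at which every colour still small at stage $t$ has been witnessed by $X_t$, and output $\emptyset'_t(n)$. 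Halting and correctness are as in \cref{complete}. The main obstacle is that the construction hardcodes the constant $c$ (and the weight-normalising shift $d$), so the reduction is not uniform in $F$; but a single Turing reduction $X \ge_T \emptyset'$ is all that is required.
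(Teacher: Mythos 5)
Your overall plan correctly instantiates what the paper means by ``the same method'': replace the weight $2^{-|\sigma|}$ over $\mathrm{dom}(U)$ with $2^{-F(m)}$ over $\omega$ (using $F\ge^+K$ to keep the total finite), replace the ejection of $\sigma$ from $M_U$ with a request forcing $K(m)<F(m)-c$, and your bookkeeping for $\mathrm{wt}(A)\le\epsilon$ via $k_n=\lceil 2^{n+c+2}/\epsilon\rceil$ is right. But the ``room'' condition you add --- colour $m$ with $j$ only when $W_j^n(m)+2^{-F(m)}\le 1/k_n$, leaving $m$ uncoloured otherwise --- is a substantive departure from the proof of \cref{complete}, and it breaks the Claim. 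In the paper's proof, colouring a new string with a currently small colour may push that colour past $1/k$; the colour then becomes permanently large and is excluded from both the Claim (``$X$ meets every \emph{ultimately-small} colour'') and the reduction's halting test. Under your room condition every colour satisfies $W_j^n\le 1/k_n$ at every stage, so \emph{every} colour is ultimately small, and the Claim would have to assert that $X$ meets every colour. This can fail: a colour $j_0$ can fill up to exactly $1/k_n$ with elements none of which lie in $X$, after which $j_0$ permanently has no room, receives no further elements, and yet remains ``small''. In the Claim's proof, when a late $\tau\in X$ is coloured, the maximizing $j$ is chosen only among colours with room, so nothing forces a colour with $r_j>N$ to be available; and such a full colour with no $X$-witness makes the reduction's search for $t$ diverge, so the algorithm isn't even total.

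The repair is to drop the room condition and follow \cref{complete} more closely: at stage $m$, colour $m$ with the currently small $j$ maximizing $r_j^n(m)$, allowing $W_j^n$ to overshoot $1/k_n$ (that colour is then large and henceforth excluded from the Claim and from the reduction's test). The invalidation still only picks a colour that is small \emph{at the moment $n$ enters $\emptyset'$}, so its request weight is still bounded by $2^{c+1}/k_n\le 2^{-(n+1)}\epsilon$. With that change the Claim and the reduction transfer verbatim from \cref{complete}, and the rest of your argument is sound. Two smaller slips: your reference to ``the revelation of every ultimately-large colour'' is vacuous as written (under the room condition there are none); and ``witnessed by $X_t$'' in the reduction should read ``witnessed by $X$'' (the oracle, checked against the finitely many strings coloured by stage $t$).
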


We remark that Corollary \ref{corc} improves a result of 
Bienvenu, Downey, Merkle and Nies
\cite{BDMN}
who showed that the collection of \emph{all} hitting points is 
Turing complete.

The reader should note that if $\sigma=m^*$, then $\sigma$ must 
be weakly $K$-random in that $K(\sigma)\ge^+ |\sigma|$.
The reason is that if $K(\sigma)<< |\sigma|$ then using the 
KC Theorem, we can use $\sigma^*$ to describe $m$, in a machine $M$ 
we build. This would show that $K(m)<<|\sigma|=|m^*|$, a contradiction.
This brings in to focus the question of precisely which 
weakly $K$-random strings are minimal descriptions.
By the Low Basis Theorem, there are infinite low collections 
of weakly $K$-random strings.
At most finitely many can be minimal descriptions.
Another consequence of Theorem \ref{complete} is the following\footnote{More or less the same proof will also give this for intersections of 
$\Delta_2^0$ sets of hitting points for Solovay functions, and 
$\Delta_2^0$ subsets of $M^*,$ this last one by the Coding Theorem,
there are at most $O(1)$ many elements of $M^*$ of length $n^*$ for a fixed $n$.}.

\begin{cor}
If $X$ is a $\Delta^0_2$ collection of weakly $K$-random strings (that is, 
$K(\sigma)\ge^+ |\sigma|$), and $|X \cap N_U| = \infty$, then $X$ computes $\emptyset'$.
\end{cor}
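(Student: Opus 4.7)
My plan is to adapt the proof of \cref{complete}. Since $X$ consists of weakly $K$-random strings, fix a constant $d$ such that $K(\sigma) \ge |\sigma|-d$ for every $\sigma \in X$; the Turing reduction from $\emptyset'$ to $X$ will depend non-uniformly on $d$, which is harmless. I will run the same colouring construction as in \cref{complete}, assigning colours $\chi$ to strings entering $\={dom}(U)$ via $k_n$-many classes for the $n$-th substrategy, except that $k_n$ is now chosen large enough that $1/k_n < 2^{-(n+d+3)}\epsilon$, in order to accommodate the more expensive invalidation step below.

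The only substantive change is the invalidation itself. In \cref{complete}, when $n$ enters $\emptyset'$ at stage $s+1$ and a still-small colour $j$ is selected, each $\sigma \in \={dom}(U)[s]$ of colour $j$ is evicted from $M_U$ by enumerating $(U(\sigma), |\sigma|-1)$ into $A$. Here, instead, I will enumerate $(\sigma, |\sigma|-d-1)$ into $A$; by \cref{cor:we_control_part_of_U} this forces $K(\sigma) \le |\sigma|-d-1 < |\sigma|-d$, so $\sigma \notin X$. The weight cost of invalidating colour $j$ is then at most $2^{d+1}\sum_{\chi(\sigma)=j} 2^{-|\sigma|} \le 2^{d+1}/k_n$, and summing over $n$ the bound $\={wt}(A) \le \epsilon$ still holds by the choice of $k_n$.

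The claim that $X$ meets every colour which is small in the limit adapts verbatim, since its proof uses only that $X$ is $\Delta_2^0$ and that $X \cap \={dom}(U)$ is infinite, the latter following from $N_U \subseteq \={dom}(U)$ together with $|X \cap N_U| = \infty$. With the claim in hand, the $X$-oracle procedure for $\emptyset'(n)$ is the same as in \cref{complete}: search for a stage $t$ such that every colour still small at stage $t$ already contains an element of $X$ coloured by stage $t$, and output $\emptyset'_t(n)$. Correctness is identical: if $n$ enters $\emptyset'$ at stage $s+1$, every $\sigma$ coloured $j$ by stage $s$ is expelled from $X$ by the invalidation, so the search cannot halt at any $t \le s$. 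The only real obstacle is the weight bookkeeping introduced by the factor $2^{d+1}$, which is exactly what the enlargement of $k_n$ handles.
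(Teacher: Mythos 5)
Your proof is correct, and it takes a genuinely different route from the paper's. The paper proves the corollary by \emph{reduction} to \cref{complete}: it uses a request set to compress every stage-$s$ approximation $n^*_s$ whose length exceeds $|n^*|+k+d$ (enumerating $(n^*_s,|n^*_s|-d)$ once that overshoot becomes a c.e.\ fact), so that the only approximations $n^*_s$ surviving in $X$ have $|n^*_s|\le |n^*|+k+d$; from this it extracts an $X$-c.e.\ infinite subset of $N_U$, passes to an $X$-computable infinite $\Delta^0_2$ subset $Y\subseteq N_U$, and then invokes \cref{complete} to get $Y\ge_T\emptyset'$, hence $X\ge_T\emptyset'$. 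You instead \emph{reprove} \cref{complete} from scratch in this setting: you keep the colouring on $\mathrm{dom}(U)$, change the invalidation request from $(U(\sigma),|\sigma|-1)$ to $(\sigma,|\sigma|-d-1)$ so that colour-$j$ strings are expelled from $X$ rather than from $M_U$, and absorb the resulting $2^{d+1}$ blow-up by shrinking $1/k_n$ accordingly. Both the bookkeeping and the verification (totality via the "every surviving small colour meets $X$" claim, correctness via the invalidation step) go through exactly as you describe; your observation that the claim only needs $X\cap\mathrm{dom}(U)$ infinite is the right one. Your approach has the advantage of being more self-contained and bypassing the somewhat delicate step of showing $X$ can actually produce an infinite subset of $N_U$; in fact it proves the slightly stronger statement with hypothesis $|X\cap\mathrm{dom}(U)|=\infty$ in place of $|X\cap N_U|=\infty$. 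The paper's approach is shorter once \cref{complete} is in hand and keeps the new argument localized to a single weight estimate. Both are valid.
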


\begin{proof}
Fix $d$ such that for every $\sigma \in X$, $K(\sigma) > |\sigma|-d$.  For every $n$, let $n^*_s$ be the natural stage $s$ approximation to $n^*$.  This may be undefined for small $s$, but it will eventually converge to the true $n^*$.  Further, if $n^*_s\converge$ and $n^*_{s+1} \neq n^*_s$, then $|n^*_{s+1}| < |n^*_s|$.

Again fix $\epsilon$ as in \cref{cor:we_control_part_of_U}.  Fix $k$ with $2^{-k} < \epsilon$.  For every $n$ and $s$ with $|n_s^*| > |n^*| + k+d$ (a c.e.\ event), we enumerate $(n, |n_s^*| - d)$ into $A$.  Since for a fixed $n$ there is at most one $n^*_s$ of any given length, the weight of our requests is bounded by
\[
\sum_{n} \sum_{i > |n^*|+k+d} 2^{-(i-d)} = 2^{-k} \sum_n 2^{-|n^*|} < 2^{-k}.
\]
Thus $\={wt}(A) \le \epsilon$, and so $K(n^*_s) \le |n^*_s| - d$ for every such $n^*_s$.

It follows that if $n^*_s \in X$, then since $K(n^*_s) > |n^*_s| - d$ by assumption, $|n^*_s| \le |n^*| + k+d$, or $|n^*| \ge |n^*_s| - k - d$.  As in the proof of \cref{hir}, this allows $X$ to enumerate an infinite sequence from $N_U$.  Since every infinite c.e.\ set contains an infinite computable set, and this relativizes, we get that $X$ computes an infinite $Y \subseteq N_U$.  As $X$ is $\Delta^0_2$, $Y$ is as well, so $Y \ge_T \emptyset'$, and thus $X \ge_T \emptyset'$.
\end{proof}

As we mentioned in the introduction, Thorem \ref{complete}
cannot be improved to show that all infinite subsets of 
$N_U$ compute $\emptyset'$.

\begin{thm}[Joeseph Miller, unpublished]
There is an infinite $X \subset N_U$ which does not compute 
$\emptyset'$.
\end{thm}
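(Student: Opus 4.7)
The plan is to construct $X$ via a forcing argument that diagonalizes against each Turing functional $\Phi_e$ to prevent $\Phi_e^X = \emptyset'$. Two structural constraints immediately narrow the approach: first, by \cref{complete}, no $\Delta_2^0$ construction yields the desired $X$, so the construction cannot be uniformly $\emptyset'$-computable; second, as the introduction signals for \S\ref{main}, $N_U$ has no infinite low subset, so we cannot aim for $X$ to be low either. We therefore target an $X$ of intermediate Turing degree, strictly below $\emptyset'$ but not low.

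My first step would be to set up Mathias-style forcing with conditions $(F, P)$ where $F \subseteq N_U$ is a finite stem and $P \subseteq N_U$ is an infinite reservoir of candidate future members, ordered by the usual Mathias extension $(F', P') \le (F, P)$ iff $F \subseteq F' \subseteq F \cup P$ and $P' \subseteq P$. For each requirement $R_e : \Phi_e^X \neq \emptyset'$, I would use a density dichotomy: given $(F, P)$, either there exist a finite $F' \subseteq P$ and a number $n$ with $\Phi_e^{F \cup F'}(n)\converge \neq \emptyset'(n)$---in which case we commit to any $(F \cup F', P')$ with $P' \subseteq P$ preserving the use---or every extension yields $\Phi_e(n) = \emptyset'(n)$ whenever defined, in which case $\emptyset'$ would be uniformly computable from the original condition using an appropriate oracle, contradicting the non-computability of $\emptyset'$ relative to that oracle.

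The main obstacle is twofold. The construction-side challenge is maintaining infinite reservoirs $P$ throughout all diagonalization steps; this is standard for Mathias forcing but must be carefully tracked here, particularly because verifying membership in $N_U$ itself requires $\emptyset'$. The subtler, jump-level challenge is that the construction inherently uses oracles strictly above $\emptyset'$ (e.g., $\emptyset''$) for its bookkeeping, so we must arrange it to be ``forgetful'': the extra oracle information used to build $X$ must not leak into $X$ itself. Since this result is attributed to an unpublished argument of Miller, the exact combinatorial trick that accomplishes this is unclear; I would expect it to exploit a specific feature of $N_U$, perhaps the weight bound $\sum_{\tau \in N_U} 2^{-|\tau|} \le \Omega$, or a ``coarse'' sparsification of the reservoir that conceals enough of $\emptyset'$ to block its recovery from $X$.
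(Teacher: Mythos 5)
Your proposal is incomplete, and you acknowledge this yourself (``the exact combinatorial trick that accomplishes this is unclear''), so the right thing to do is to identify the gap and point at the actual mechanism.

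There is also a substantive error in your framing. You target an $X$ of ``intermediate Turing degree, strictly below $\emptyset'$.'' But no such $X$ can work: by \cref{complete}, \emph{any} infinite $\Delta^0_2$ subset of $N_U \subseteq M_U$ computes $\emptyset'$, so an $X \le_T \emptyset'$ not computing $\emptyset'$ cannot exist. The $X$ you need is necessarily Turing-incomparable with $\emptyset'$, not below it. This matters because your Mathias forcing setup is plagued by exactly this issue: the reservoir $P \subseteq N_U$ cannot be maintained without an oracle at least as strong as $\emptyset'$, and your proposed density dichotomy for cone avoidance (``else $\emptyset'$ is computable from the condition'') requires the condition itself to be weak, which it cannot be if it encodes an infinite subset of $N_U$. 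You correctly identify this as the obstruction, but you do not resolve it, and the resolution is not a patch on Mathias forcing---it is a different construction.

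The paper's argument sidesteps the problem by never trying to build $X$ directly. It works with the $\Pi^0_1$ class of $K$-compression functions (injective $G$ with $G(n) \le K(n)$, introduced by Nies--Stephan--Terwijn), bounded inside $2^\omega$ via $K(n) \le^+ 2\log n$. Applying the ``low for $\Omega$'' basis theorem of Downey--Hirschfeldt--Miller--Nies and Reimann--Slaman, one obtains a member $F$ of this class that is low for $\Omega$, equivalently weakly low for $K$: $F(n) =^+ K(n)$ for infinitely many $n$. Such an $F$ cannot compute $\emptyset'$ (if it did, $\Omega \le_T F$ and $\Omega$ could not be $F$-random). Fixing the least $c$ with $K(n) = F(n)+c$ infinitely often and an $m$ beyond which $K(n) \ge F(n)+c$, the set $\{ n^*_s : n \ge m \wedge |n^*_s| = F(n)+c\}$ is an infinite $F$-c.e.\ subset of $N_U$: since the approximation $|n^*_s|$ only decreases to $K(n)$, once it reaches the lower bound $F(n)+c$ it must already be correct. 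Thinning to an infinite $F$-computable subset gives the desired $X$. In short, the missing idea is to locate an oracle---a lower bound on $K$ that is exact infinitely often and avoids $\emptyset'$---and let that oracle certify minimal codes, rather than trying to force $X$ itself. The basis theorem does the cone-avoidance for you; no diagonalization against functionals is needed.
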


\begin{proof}
Let $P$ be a
bounded $\Pi^0_1$ class of $K$-compression functions\footnote{A $K$-compression
function is an injective function $G: \omega \to \omega$ such that for all $n$,
$G(n)\le K(n)$. These were introduced by Nies, Stephan and Terwijn
\cite{NST} in their proof that 2-randomness is the same as 
infinitely often $C$-random.}.  Since we have an a priori upper bound of $K(n) \le^+ 2\log(n)$, we may take $P \subseteq 2^\omega$.
 Let $F$ be a
``weakly-low for $K$'' path. 
That is, there are infinitely many $n$ with $F(n)=^+K(n)$. This 
can be shown to exist using
the  ``low for $\Omega$''-Basis Theorem\footnote{Every $\Pi_1^0$ 
class on $2^\omega$ contains a $\emptyset'$-left c.e.\ real $A$
relative to which $\Omega^A=\Omega$.}  
(Downey, Hirschfeldt, Miller, Nies \cite{DHMN}, Reimann and Slaman \cite{RS})
and the fact that low for $\Omega$  is equivalent to ``weakly
low for $K$'', see Downey and Hirschfeldt \cite{DH}.

Now fix the least $c$ with $K(n) = F(n)+c$ for infinitely many $n$, and fix an $m$ with $K(n) \ge F(n) + c$ for all $n \ge m$. $F$ can enumerate an infinite subset of $N_U$: $\{ n_s^* : n \ge m \wedge |n_s^*| = F(n)+c\}$.  Thus $F$ computes an infinite $X \subseteq N_U$ (again relativizing the fact that every infinite c.e.\ set has an infinite computable subset), Since $F$ does not compute $\emptyset'$ (since it is weakly-low for $K$), $X$ also does
not compute $\emptyset'$.\end{proof}

\section{Conditional complexity along semi-low sets}
\label{semi}

Semi-lowness has previously been studied for co-c.e.\ sets.  We are interested in it for $\Delta^0_2$ sets, in which case it is not entirely clear that the following is the correct definition\footnote{An alternative definition would additionally require that $\{e : W_e \subseteq X\}$ is $\Delta^0_2$; note that when $X$ is co-c.e., this set is $\Pi^0_1$.}, but it is the definition relevant to our current interest.

\begin{dfn}
Let $(W_e)_{e \in \omega}$ be a standard listing of c.e.\ sets.  A set $X$ is {\em semi-low} if the set $\{ e : X \cap W_e \neq  \emptyset\}$ is $\Delta^0_2$.
\end{dfn}

Recall \Cref{thm:motivation}:
\[
K^{\emptyset'}(\sigma) =^+ \limsup_{n \in \omega} K(\sigma\mid n).
\]

As we have seen, it can be helpful to consider $\limsup_{n \in X} K(\sigma\mid n)$ for $X$ infinite.  It is immediate that this is $\le^+ K^{\emptyset'}(\sigma)$, as we are taking a limit supremum over a smaller set.  It turns out that for semi-low sets, we have equality.

\begin{prop}\label{prop:semi-lowness_suffices}
If $X$ is semi-low and infinite, then $K^{\emptyset'}(\sigma) =^+ \limsup_{n \in X} K(\sigma\mid n)$.
\end{prop}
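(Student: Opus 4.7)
The easy direction $\limsup_{n \in X} K(\sigma \mid n) \le^+ K^{\emptyset'}(\sigma)$ is immediate from \Cref{thm:motivation}, since a limit supremum taken over a subset can only decrease. For the reverse direction, set $k_\sigma := \limsup_{n \in X} K(\sigma \mid n)$; the plan is to apply the relativized Machine Existence Theorem to produce a $\emptyset'$-c.e. request set $R$ of weight at most $1$ containing $(\sigma, k_\sigma + 1)$ for every $\sigma$. This gives $K^{\emptyset'}(\sigma) \le^+ k_\sigma$.

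For the weight bound, Kraft's inequality yields $\sum_\sigma 2^{-K(\sigma\mid n)} \le 1$ for each fixed $n$, and $2^{-k_\sigma} = \liminf_{n \in X} 2^{-K(\sigma \mid n)}$. The countable Fatou inequality then gives
\[
\sum_\sigma 2^{-k_\sigma} \;\le\; \liminf_{n \in X} \sum_\sigma 2^{-K(\sigma \mid n)} \;\le\; 1,
\]
so if we enumerate $(\sigma, k+1)$ into $R$ whenever we detect $k_\sigma \le k$, the total weight is at most $\sum_\sigma \sum_{k \ge k_\sigma} 2^{-k-1} = \sum_\sigma 2^{-k_\sigma} \le 1$, as required.

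The step where semi-lowness enters is making that detection $\emptyset'$-effective. Unpacked, $k_\sigma \le k$ reads
\[
\exists N \; \bigl[\, X \cap W_{\sigma, k, N} = \emptyset \,\bigr], \qquad W_{\sigma, k, N} := \{n \ge N : K(\sigma \mid n) > k\},
\]
which a priori looks $\Sigma^0_3$ since $X$ is only $\Delta^0_2$. However, $W_{\sigma, k, N}$ is uniformly c.e.\ in $(\sigma, k, N)$, and semi-lowness is exactly the hypothesis that renders ``$X \cap W_e = \emptyset$'' a $\Delta^0_2$ predicate of $e$; this collapses the whole condition to $\Sigma^0_2$, hence $\emptyset'$-c.e. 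Feeding $R$ into the Machine Existence Theorem relative to $\emptyset'$ then yields a prefix-free $\emptyset'$-machine $M$ with $K_M(\sigma) \le k_\sigma + 1$, and hence $K^{\emptyset'}(\sigma) \le^+ k_\sigma$, completing the proof. The main subtlety is precisely this $\Sigma^0_3 \to \Sigma^0_2$ collapse: bare $\Delta^0_2$-ness of $X$ would not suffice, while lowness, which would also work, is forbidden inside $M_U$ by \Cref{complete}.
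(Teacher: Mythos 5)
Your easy direction and your Fatou weight bound $\sum_\sigma 2^{-k_\sigma}\le 1$ are correct, but the detection step has a genuine gap: the set $W_{\sigma,k,N}=\{n\ge N: K(\sigma\mid n)>k\}$ is \emph{not} c.e.\ --- the predicate $K(\sigma\mid n)>k$ is $\Pi^0_1$ (it is the negation of ``some description of length $\le k$ halts on oracle $n$ and outputs $\sigma$''), so $W_{\sigma,k,N}$ is co-c.e. Semi-lowness as defined in this paper is the statement that $\{e: X\cap W_e\neq\emptyset\}$ is $\Delta^0_2$ for the standard listing of \emph{c.e.}\ sets $W_e$; it gives you no control over $X\cap W=\emptyset$ when $W$ is $\Pi^0_1$. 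So the claimed $\Sigma^0_3\to\Sigma^0_2$ collapse does not go through, and you do not get a $\emptyset'$-c.e.\ enumeration of $\{(\sigma,k): k_\sigma\le k\}$ this way. (For a $\Pi^0_1$ set $W$ and a $\Delta^0_2$ set $X$, the statement $X\cap W=\emptyset$ is only $\Pi^0_2$ in general, and quantifying $\exists N$ outside still leaves you above $\Sigma^0_2$.)

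The paper's proof is built specifically to avoid this obstacle. Rather than testing the $\Pi^0_1$ condition ``$K(\sigma\mid n)>k$'', it tests the $\Sigma^0_1$ condition ``$\mathrm{wt}(D\cup A_n)>2$'', where $A_n$ is the request set generating $K(\cdot\mid n)$ and $D$ is a finite candidate extension of the request set $B$ being built. Because weights of c.e.\ request sets are lower-semicomputable, the set $\{n\ge m : \mathrm{wt}(D\cup A_n)>2\}$ \emph{is} uniformly c.e., so semi-lowness applies, and $\emptyset'$ can greedily decide whether to admit each candidate pair $(\sigma_m,s_m)$ into $B$. A separate verification then shows that every pair $(\sigma,\limsup_{n\in X}K(\sigma\mid n))$ is eventually admitted. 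To salvage your proof you would have to replace your co-c.e.\ test by some lower-semicomputable surrogate of this kind; the Fatou computation would then survive as the weight-budget intuition, but the enumeration mechanism must be restructured along the paper's lines.
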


\begin{proof}
As one direction is immediate, it remains to show that
\[
K^{\emptyset'}(\sigma) \le^+ \limsup_{n \in X} K(\sigma\mid n).
\]
We will work with request sets.

For each $n \in \omega$, define $A_n = \{ (\sigma, s) : s \ge K(\sigma\mid n)\}$.  We may think of $A_n$ as the request set generating $K(\cdot |n)$.  Observe that $\={wt}(A_n) < 2$ for all $n$.

Note that for any finite set $D \subset 2^{<\omega} \times \omega$ and any $m \in \omega$, the set $\{ n\ge m : \={wt}(D \cup A_n) > 2\}$ is c.e.\ (it is even primitive recursive with appropriate assumptions on $K(\cdot|n)$, but this is not necessary).  Indeed this is uniform, so we may fix a total computable function $e$ such that $W_{e(D, m)} = \{n \ge m : \={wt}(D\cup A_n) > 2\}$, where $D$ is given by a canonical index.

We will build a $\emptyset'$-enumerable request set $B$ with $\={wt}(B) \le 2$ and such that for all $\sigma$, if $s = \limsup_{n \in X} K(\sigma\mid n)$, then $(\sigma, s) \in B$.  By \cref{cor:finite_request_sets} relativized to $\emptyset'$, this will suffice to prove the result.

Fix an effective listing $(\sigma_m, s_m)_{m  \in \omega}$ of $2^{<\omega}\times \omega$ such that every pair is repeated infinitely many times on the list.  We define $B$ as follows:
\begin{itemize}
    \item $B_0 = \emptyset$;
    \item Given $B_m$, fix $D = B_m \cup \{(\sigma_m, s_m)\}$.  If $X \cap W_{e(D,m)} = \emptyset$, we let $B_{m+1} = D$; otherwise, we let $B_{m+1} = B_m$.
\end{itemize}
As $X$ is semi-low, $\emptyset'$ can run this construction, and so $B$ is $\emptyset'$-enumerable.

\begin{claim}
For all $n \ge m$ with $n \in X$, $\={wt}(B_m \cup A_n) \le 2$, and thus $\={wt}(B) \le 2$.
\end{claim}

\begin{proof}
Suppose not.  Then as this clearly holds for $B_0$, we may fix $m+1$ the least value where the claim is violated.  So there is some $n \ge m+1$ with $n \in X$, $\={wt}(B_m \cup A_n) \le 2$ and $\={wt}(B_{m+1} \cup A_n) > 2$.  As $B_{m+1} \neq B_m$, we must be in the case $X \cap W_{e(D, m)} = \emptyset$, with $B_{m+1} = B_m \cup \{(\sigma_m, s_m)\} = D$.  But $n \in X \cap W_{e(D, m)}$, a contradiction.

That $\={wt}(B) \le 2$ then follows from $X$ being infinite.
\end{proof}

\begin{claim}
For any $\sigma$, if $s = \limsup_{n \in X} K(\sigma\mid n)$, then $(\sigma, s) \in B$.
\end{claim}

\begin{proof}
Fix an $n_0$ such that for all $n \ge n_0$ with $n \in X$, $K(\sigma\mid n) \leq s$.  Then for all $n \ge n_0$ with $n \in X$, $(\sigma, s) \in A_n$.  Fix an $m \ge n_0$ such that $(\sigma, s) = (\sigma_m, s_m)$.  Let $D = B_m \cup \{\sigma_m, s_m\}$.  Then for all $n \ge m$ with $n \in X$, $D \cup A_n = B_m \cup A_n$, and $\={wt}(B_m \cup A_n) \le 2$.  So $X \cap W_{e(D,m)} = \emptyset$, and $(\sigma, s) \in B_{m+1}$ by construction.
\end{proof}

This completes the proof.
\end{proof}

\section{Conditional complexity along minimal codes}
\label{main}

Fix a universal prefix-free machine $U$.  We are interested in $\limsup_{\tau \in M_U} K(\sigma\mid \tau)$ and $\lim_{\tau \in N_U} K(\sigma\mid \tau)$.  First we verify that these values are machine independent.

\begin{lem}
If $U$ and $V$ are universal prefix-free machines, and $K(\cdot | \cdot)$ is defined from a third (unnamed) universal prefix-free machine, then
\[
\limsup_{\tau \in M_U} K(\sigma\mid \tau) =^+ \limsup_{\tau \in M_V} K(\sigma\mid \tau) = \limsup_{\tau \in N_V} K(\sigma\mid \tau).
\]
\end{lem}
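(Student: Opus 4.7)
The plan is to prove a single master inequality and chain instances of it together. The master inequality asserts that for any universal prefix-free machines $P$ and $Q$ and any $\tau \in M_P$ with $P(\tau) = x$,
\[ K(\sigma \mid \tau) \le^+ K(\sigma \mid x^{*}_Q), \]
where $x^{*}_Q$ is the first $Q$-minimal code of $x$ and the additive constant depends only on $P$ and $Q$.

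To establish the master inequality, first fix $c$ with $|K_P(y) - K_Q(y)| \le c$ for all $y$ (available since $P$ and $Q$ are both universal), and let $W$ be the universal prefix-free conditional machine defining $K(\cdot \mid \cdot)$. I will build a prefix-free conditional machine $M$ which, on input $(0^{i}1\rho, \tau)$ with $0 \le i \le 2c$, computes $x = P(\tau)$, sets $s = |\tau| + i - c$, enumerates $Q$-codes of $x$ until one of length exactly $s$ appears, calls that code $\tau''$, and outputs $W(\rho, \tau'')$. For each fixed $\tau$ the admissible first arguments form a prefix-free set: the prefixes $\{0^{i}1 : 0 \le i \le 2c\}$ are mutually prefix-free, and for each allowed $i$ the suffixes $\rho$ are prefix-free by prefix-freeness of $W$. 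The short prefix $0^{i}1$ lets $M$ try all $2c+1$ possible values of $K_Q(x) - |\tau| + c$ simultaneously; for the correct choice $i_0 = K_Q(x) - |\tau| + c$ (which lies in $\{0, \dots, 2c\}$ since $|\tau| = K_P(x)$), the first $Q$-code of length $K_Q(x) = s$ enumerated is exactly $x^{*}_Q$, yielding $K_M(\sigma \mid \tau) \le K(\sigma \mid x^{*}_Q) + 2c + 2$. Universality of $W$ then gives the master inequality.

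With the master inequality in hand, I conclude the lemma as follows. Applying the inequality with $(P, Q) = (U, V)$ and with $(P, Q) = (V, V)$ bounds both $\limsup_{\tau \in M_U} K(\sigma \mid \tau)$ and $\limsup_{\tau \in M_V} K(\sigma \mid \tau)$ above, up to $O(1)$, by $\limsup_{\tau' \in N_V} K(\sigma \mid \tau')$; this uses the observation that $|x^{*}_V|$ differs from $|\tau|$ by at most $c$, so any sequence in $M_P$ of lengths tending to infinity produces a sequence in $N_Q$ of lengths tending to infinity, whose limsup is bounded by the full limsup over $N_Q$. The inclusion $N_V \subseteq M_V$ gives the reverse inequality $\limsup_{\tau' \in N_V} K(\sigma \mid \tau') \le \limsup_{\tau \in M_V} K(\sigma \mid \tau)$. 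Finally, applying the master inequality with $(P, Q) = (V, U)$ together with $N_U \subseteq M_U$ yields $\limsup_{\tau \in M_V} K(\sigma \mid \tau) \le^+ \limsup_{\tau \in M_U} K(\sigma \mid \tau)$, closing the cycle and forcing all three quantities to agree up to $O(1)$.

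No substantial obstacle is anticipated. The only point requiring genuine care is the design of $M$: the prefix $0^{i}1$ must be short enough to give an $O(1)$ overhead while providing enough flexibility to absorb the $O(1)$-sized uncertainty in $K_Q(x) - |\tau|$ that arises because $K_Q(x)$ is not computable from $x$. Everything else is routine universality plus bookkeeping of constants.
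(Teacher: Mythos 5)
Your proof is correct and takes essentially the same approach as the paper: both proofs exploit that minimal codes for the same target under different universal machines differ in length by $O(1)$, and use that $O(1)$ offset to recover a $V$-minimal code from a $U$-minimal code. The paper packages this via uniform request sets $B_\tau$ and the Machine Existence Theorem, while you build the conditional prefix-free machine directly with a $0^i1$ prefix; these are the same construction in two interchangeable formalisms.
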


\begin{proof}
By symmetry, and the fact that $N_V \subseteq M_V$, it suffices to show
\[
\limsup_{\tau \in M_U} K(\sigma\mid \tau) \le^+ \limsup_{\tau \in N_V} K(\sigma\mid \tau).
\]
By standard arguments, there is a constant $c$ such that if $\tau \in M_U$, $\rho \in M_V$, and $U(\tau) = V(\rho)$, then $| |\tau| - |\rho| | \le c$.

For each $\tau \in 2^{<\omega}$ and $i \in \mathbb{Z}$ with $|i| \le c$, let $ \rho(\tau,i)$ be the first $\rho$ located with $|\rho| = |\tau|+i$ and $U(\tau)\converge = V(\rho)\converge$, if such $\rho$ exists.  We define
\[
B_\tau = \{ (\sigma, s) : \exists i\, \rho(\tau,i)\converge \wedge s \ge K(\sigma\mid \rho(\tau,i))\}.
\]
Then $\={wt}(B_\tau) \le \sum_{|i| \le c} \sum_{\sigma} 2\cdot 2^{-K(\sigma\mid \rho(\tau,i))} \le (2c+1)\cdot 2$, and thus these are uniformly given request sets.

It follows that $K(\sigma\mid \tau) \le^+ K(\sigma\mid \rho(\tau,i))$ for all $i$ with $\rho(\tau,i)\converge$.  Note that if $\tau \in M_U$, then there is an $i$ with $\rho(\tau,i)\converge \in N_V$.  The claim follows.
\end{proof}

\begin{prop}\label{prop:semi_low_existence}
There is a universal prefix-free machine $U$ and an infinite, semi-low set $X \subseteq N_U$.
\end{prop}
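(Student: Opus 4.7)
The plan is to build $U$ and $X$ together via a $\emptyset'$-oracle construction. By the Recursion Theorem I fix a universal prefix-free machine $U$ and a short prefix $p$ with $U(p\sigma) = M(\sigma)$ for an auxiliary prefix-free machine $M$ constructed on the fly by enumerating a KC-request set; every element of $X$ will take the form $\tau_n = p\sigma_n$, where the suffixes $\sigma_n \in 2^{<\omega}$ are chosen in stages from a prefix-free family.

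Ensuring $\tau_n \in N_U$ is routine. Once $\sigma_n$ is committed, I use $\emptyset'$ to locate a fresh target $x_n$ satisfying the $\Pi^0_1$ condition $K(x_n) \ge |\tau_n|$, and I enumerate the request $(x_n, |\sigma_n|)$. This forces $M(\sigma_n) = x_n$ and $K_U(x_n) = |\tau_n|$; because $x_n$ is fresh and $\tau_n = p\sigma_n$ is the first $U$-description of $x_n$ to appear, we get $\tau_n \in N_U$.

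Semi-lowness is the delicate part. I run a priority-style $\emptyset'$-construction with requirements $R_e$: ``decide $W_e \cap X$''. At stage $n$, let $A_n = \{e < n : W_e \cap \{\tau_0,\dots,\tau_{n-1}\} = \emptyset\}$, and fix a large candidate block $\{p\sigma : |\sigma| = l_n\}$ with $l_n$ growing fast. For each candidate $\tau$ and each $e \in A_n$, $\emptyset'$ decides the $\Pi^0_1$ condition ``$\tau$ never enters $W_e$''. If some candidate satisfies all of these simultaneously, commit $\tau_n$; otherwise pigeonhole furnishes some $e \in A_n$ whose $W_e$ covers a large fraction of the block, and I sacrifice the lowest-priority such $e$ by taking $\tau_n \in W_e$. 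Since $e$ leaves $A_m$ as soon as it is hit, each $R_e$ is sacrificed at most once.

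The main obstacle will be producing a $\emptyset'$-computable bound $h(e)$ on the sacrifice stage of each $R_e$, so that $W_e \cap X \neq \emptyset$ iff $\tau_m \in W_e$ for some $m \le h(e)$; this is precisely what upgrades the otherwise merely $\Sigma^0_2$ predicate ``$e$ was ever sacrificed'' to a $\Delta^0_2$ one, giving semi-lowness. This is arranged by choosing $l_n$ to grow fast enough that the pigeonhole event (some $W_e$ covering a large fraction of the block) is recognised by $\emptyset'$ at a stage depending only on $e$ and the finitely many higher-priority $R_{e'}$ --- exploiting precisely the gap between low and semi-low: we only need $\emptyset'$-predictability of hitting events, not $\emptyset'$-lowness of $X$ itself, which by \Cref{complete} is unavailable.
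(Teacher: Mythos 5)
The proposal has a fatal gap at its foundation: the machine $U$ must be a genuine (unrelativized) universal prefix-free machine, which forces the KC request set defining the auxiliary machine $M$ to be computably enumerable, not merely $\emptyset'$-enumerable. Your construction explicitly invokes $\emptyset'$ at every stage---to locate a fresh $x_n$ with the $\Pi^0_1$ property $K(x_n)\ge|\tau_n|$, and to decide the $\Pi^0_1$ avoidance conditions on candidate blocks---so the requests for $M$ are enumerated only relative to $\emptyset'$, producing an $M$ (and hence a $U$) computable only from $\emptyset'$. Nor can you simply replace $\emptyset'$ by a recursive approximation and carry on as written: a request $(x_n,|\sigma_n|)$ issued on the strength of a stage-$s$ guess that $K(x_n)\ge|\tau_n|$ can later be falsified when a shorter code for $x_n$ appears in the background universal machine, so the committed $\tau_n$ drops out of $N_U$ and the weight spent on the request is wasted. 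Bounding all such waste, and arranging that the strategy recovers, is where essentially all the work lies; the paper handles this by tracking an approximation $N_s$ to $N_U$, having strategies \emph{claim} strings of $N_s$ that can be stolen or can vanish, and charging wasted weight against $\sum_{\tau\in N_V}2^{-|\tau|}$ for the underlying machine $V$ (the $\tfrac18+\tfrac18$ terms in its weight budget). None of this appears in your proposal, and the phrase ``priority-style $\emptyset'$-construction'' does not by itself discharge it.

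The semi-lowness mechanism you sketch is also not established. You want a $\emptyset'$-computable bound $h(e)$ on the last stage at which $R_e$ could be sacrificed, but the active set $A_n$ grows with $n$ and a given block can force a sacrifice of some $e'>e$ at stages that depend on the actual behaviour of infinitely many $W_{e'}$, so it is unclear how $h(e)$ is to be read off from $e$ and the finitely many higher-priority indices; the phrase ``choosing $l_n$ to grow fast enough'' names the difficulty rather than solving it. The paper's mechanism is different in a way that matters: each $R_e$ has a \emph{meet}/\emph{avoid} directive and claims strings of length $\ge 2e+4$ up to a fixed weight threshold $2^{-(2e+4)}$; a weight argument shows $R_e$ changes directive only finitely often, so $\emptyset'$ can compute a stage after which $R_j,P_n$ for $j,n\le e$ have settled and then inspect a finite list of surviving claims. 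That finite-change property is what converts a $\Sigma^0_2$ predicate into a $\Delta^0_2$ one, and it is precisely the piece your proposal leaves open.
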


\begin{proof}
Fix some universal prefix-free machine $V$.  
We define $U(0^{3}\cat\tau) = V(\tau)$ for all $\tau$, which makes $U$ universal while giving us the freedom to do as we like on other neighborhoods.

Let $(N_s)_{s \in \omega}$ be the natural approximation to $N_U$.  We will have semi-lowness requirements $R_e$ and infiniteness requirements $P_n$.  The strategy for each requirement will {\em claim} various strings in $N_s$, and each strategy will have a directive at every stage: meet or avoid.  A string may only be claimed by a single strategy at a time, and a strategy will retain its claim on a string until either the string leaves $N_s$, or a higher priority strategy claims the string.  In either case, the strategy will immediately relinquish its claim.

We will build a c.e.\ set $A \subseteq 2^{<\omega} \times\omega$.  As we will argue, the sum $\sum 2^{-|\sigma|}$ over all strings $\sigma$ which are ever claimed in the construction will be bounded by 1/2.  The first time a string $\sigma$ is claimed by a strategy (i.e.\ it was unclaimed at all previous stages), we will immediately enumerate $(k, |\sigma|-1)$ into $A$, where $k$ is larger than any value yet seen in the construction.  As the previous sum is bounded by 1/2, $\={wt}(A) \le 1$.  By the Machine Existence Theorem, we uniformly obtain the index of a corresponding prefix-free machine $Q$ such that for every such pair $(k, |\sigma|-1) \in A$, there is a $\tau$ with $|\tau| = |\sigma|-1$ and $Q(\tau) = k$.

We define $U(1\cat\tau) = Q(\tau)$ for all $\tau$.  Suppose $\sigma$ is first claimed at stage $s$, and so we enumerate $(k, |\sigma|-1)$ into $A$ for some large $k$.  Then for the appropriate $\tau$, $|1\cat\tau| = |\sigma|$ and $U(1\cat\tau) = k$.  By the largeness of $k$, $N_s$ contains no codes for $k$, so $\tau$ will belong to $N_U$ unless $V$ enumerates a sufficiently shorter code at some subsequent stage.  The idea is that whenever a potential element of $N_U$ is claimed, we ensure it is replaced with a new element of the same length or shorter.

This completes the description of $U$, apart from describing how strategies claim strings.  We order our requirements $R_0, P_0, R_1, P_1, \dots$.  At stage $s$, we consider the first $s$ requirements in order, implementing the following strategies.

\smallskip

{\em Strategy for $P_n$:}

$P_n$ will always have the {\em meet} directive, and will claim at most one string at a time.  At stage $s$, if it retains a claimed string from the previous stage (i.e.\ $s>0$, $P_n$ had a claimed string at stage $s-1$, that string remains in $N_s$, and that string has not been claimed by a higher priority strategy earlier in stage $s$), then we take no further action.  Otherwise, if there is a string in $N_s$ of length at least $2n+4$ and unclaimed by any higher priority strategy, $P_n$ claims the least one (in some effective ordering).  If $P_n$ did not retain a claimed string, and there is no appropriate string to claim, we simply do nothing.

\smallskip

{\em Strategy for $R_e$:}

At stage $s$, let $C$ be the set of strings which $R_e$ retains the claim on from the previous stage: those strings which it claimed at stage $s-1$, which remain in $N_s$, and which were not claimed by a higher priority strategy earlier in stage $s$.  If $R_e$ was not considered at stage $s-1$ (possibly because $s = 0$), then $C=\emptyset$.

Let $w = \sum_{\sigma \in C} 2^{-|\sigma|}$.  Our directive for $R_e$ at stage $s$ will depend on $w$ and on $R_e$'s directive at stage $s-1$:
\begin{itemize}
    \item If $w = 0$ (i.e.\ $C = \emptyset$), then $R_e$ has the directive {\em avoid} at stage $s$.
    \item If $0 < w \le 2^{-(2e+4)}$, then $R_e$ retains the same directive as it had at the previous stage ($w > 0$ entails that $R_e$ was considered at the previous stage).
    \item If $w > 2^{-(2e+4)}$, then $R_e$ has the directive {\em meet} at stage $s$.
\end{itemize}

If our directive for $R_e$ at stage $s$ is {\em meet}, we take no further action at this stage.

If our directive for $R_e$ at stage $s$ is {\em avoid}, and there is a string in $N_s \cap W_{e,s}$ of length at least $2e+4$ and unclaimed by any higher priority strategy, then $R_e$ claims the least such string (in some effective ordering).  Our action for $R_e$ at this stage is then complete.

Thus the behaviour of $R_e$ is a yo-yo: it continues to claim strings until the claimed strings surpass its threshold, at which point it stops and lets those strings bleed away.  Once it has lost all of its claims, the strategy begins claiming new strings again.  Note that we claim no more than one string for $R_e$ at each stage.

This completes the construction.

\smallskip

{\em Definition of $X$:}

Observe that if $\sigma \in N_U$ is claimed by a strategy, then there are only two possibilities: that strategy may retain its claim on $\sigma$ forever, or the claim on $\sigma$ may pass to a higher priority strategy.  As we will argue, each $R_e$ strategy changes its directive only finitely many times.  Thus we may speak of a strategy's ultimate directive.  We then define $X$ as follows, for each string $\sigma$:
\begin{itemize}
    \item If $\sigma \not \in N_U$, then $\sigma \not \in X$;
    \item If $\sigma \in N_U$ but $\sigma$ is never claimed by any strategy, then $\sigma \not \in X$;
    \item If $\sigma \in N_U$ and $\sigma$ is claimed by some strategy, fix the highest priority strategy to ever claim $\sigma$.  If that strategy's ultimate directive is {\em meet}, then $\sigma \in X$; otherwise, $\sigma \not \in X$.
\end{itemize}

\smallskip

{\em Verification:}

First we must keep our promises.

\begin{claim}
At any stage $s$, for the strategy $R_e$, the value $w = w(e,s)$ is at most $2^{-(2e+3)}$.
\end{claim}

\begin{proof}
At each stage, $R_e$ claims at most one string, and that string will always have length at least $2e+4$.  So $w(e,s+1) - w(e,s) \le 2^{-(2e+4)}$.  Further, $R_e$ only claims a string if $w(e,s) \le 2^{-(2e+4)}$, so $w(e,s+1)$ is at most $2^{-(2e+4)} + 2^{-(2e+4)} = 2^{-(2e+3)}$.
\end{proof}

\begin{claim}
The sum $\sum 2^{-|\sigma|}$ over all strings $\sigma$ which are ever claimed in the course of the construction is at most $1/2$.
\end{claim}

\begin{proof}
If a string $\sigma$ is claimed during the construction, there are three possible fates for $\sigma$: 1) there is some $n$ such that $P_n$ claims $\sigma$ at almost every stage; 2) there is some $e$ such that $R_e$ claims $\sigma$ at almost every stage; 3) $\sigma$ leaves $N_U$.  We consider each case in turn.

Fix $n$.  By construction, there is at most one string which is ultimately claimed by $P_n$, and such a string has length at least $2n+4$.  So the sum $\sum 2^{-|\sigma|}$ over all strings $\sigma$ ultimately claimed by $P_n$ is bounded by $2^{-(2n+4)}$.

Fix $e$.  Let $\hat{C}$ be the set of strings $\sigma$ such that $R_e$ claims $\sigma$ at almost every stage.  Then each $\sigma \in \hat{C}$ contributes to almost every $w(e,s)$, so $\sum_{\sigma \in \hat{C}}2^{-|\sigma|} \le \sup_s w(e,s) \le 2^{-(2e+3)}$.

Finally, consider strings which leave $N_U$.  We will split this case into two subcases, based on whether the given string extends $000$, and so was introduced by our copying of $V$, or it extends 1, and so was introduced by our other actions.  Since the extensions of $000$ in the domain of $U$ form an antichain, the sum $\sum 2^{-|\sigma|}$ over all strings in the first subcase is bounded by $1/8$.

Consider now the second subcase.  Note that strings never leave $N_U$ because of our action; instead, if $\sigma$ leaves $N_U$, then there must be some $\tau \in N_V$ with $V(\tau) = U(\sigma)$ and $|\tau| < |\sigma|-3$.  As we always choose our values large, if $\sigma_0$ and $\sigma_1$ are distinct strings from this subcase, $U(\sigma_0) \neq U(\sigma_1)$, so the corresponding $\tau$s are distinct.  So summing over the $\sigma$ of this subcase, we have $\sum 2^{-|\sigma|} < \frac18 \sum_{\tau \in N_V} 2^{-|\tau|} < \frac18$.
Putting these all together, our desired sum is bounded by
\[
\sum_{n \in \omega} 2^{-(2n+4)} + \sum_{e \in \omega} 2^{-(2e+3)} + \frac18 + \frac18 \le \frac12,
\]
as desired.
\end{proof}

\begin{claim}
Each $R_e$ changes its directive only finitely many times.
\end{claim}

\begin{proof}
Suppose not.  Then there is a sequence of stages $s_0 < s_1 < \dots$ such that $R_e$ has directive {\em avoid} at stage $s_i$, and has directive {\em meet} at stage $s_i+1$, for every $i$.  In order to switch from {\em meet} at stage $s_i+1$ back to {\em avoid} at stage $s_{i+1}$, every string claimed by $R_e$ at stage $s_i+1$ must either be stolen by a higher priority strategy or leave $N_U$, both of which are irreversible.  Thus the strings which contribute to $w(e,s_i+1)$ must be entirely different from those which contribute to $w(e, s_j+1)$ for $j \neq i$.  But $w(e,s_i+1) > 2^{-(2e+4)}$ for every $i$, and the strings which contribute to $w(e,s_i+1)$ all belong to $\={dom}(U)$, a contradiction.
\end{proof}

Our promises being met, the construction of $X$ is as described.  Now  we verify that $X$ has the desired properties.

\begin{claim}
For each $n$, the strategy $P_n$ ultimately claims a string which it never renounces its claim upon, and thus $X$ is infinite.
\end{claim}

\begin{proof}
Fix $n$.  It suffices to argue that there is some string in $N_U$ of length at least $2n+4$ which is never claimed by any strategy.

Fix $s_0$ such that $N_{s_0}$ has converged on all strings of length less than $2n+4$.  We build a sequence $\sigma_0, \sigma_1, \dots \in N_U$:
\begin{itemize}
    \item Fix some $\sigma_0 \in N_U \setminus \={dom}(U_{s_0})$.  Such a $\sigma_0$ must exist, as $U$ is universal (and in particular, surjective).
    \item If $\sigma_i$ is eventually claimed by some strategy, then the construction responds by enumerating a $\tau$ into $\={dom}(U)$ with $|\tau| = |\sigma_i|$ and $U(\tau)$ not any previously seen value.  It may be that $\tau$ is not a minimal code, but there is some $\sigma_{i+1} \in N_U$ with $U(\sigma_{i+1}) = U(\tau)$, and $\sigma_{i+1}$ enters $\={dom}(U)$ after the stage at which $\sigma_i$ is claimed (by the largeness of $U(\tau)$).
\end{itemize}
Inductively, we see that $U(\sigma_i) \neq U(\sigma_j)$ for any $j < i$, and so the sequence $\sigma_0, \sigma_1, \dots$ is injective.  Further, $|\sigma_{i+1}| \le |\sigma_i|$.  As $\sum_{\sigma \in N_U} 2^{-|\sigma|} < 1$, this sequence must be finite, so there is some $\sigma_i$ which is never claimed by any strategy.  Since $\sigma_i$ enters $\={dom}(U)$ after stage $s_0$, $|\sigma_i| \ge 2n+4$ by choice of $s_0$.
\end{proof}

\begin{claim}
$X$ is semi-low.
\end{claim}
\begin{proof}
We give an algorithm for determining whether $X \cap W_e = \emptyset$, using oracle $\emptyset'$.  First, $\emptyset'$ can determine a stage $s_0$ such that every $R_j$ strategy with $j \le e$ has settled on its ultimate directive, and such that each $P_n$ strategy with $n < e$ has made its ultimate claim.

We may ignore those $R_j$ with $j \le e$ which have {\em avoid} as their ultimate directive.  For the remaining, they have claimed some finitely many strings by stage $s_0$, and none will ever claim another string.  With oracle $\emptyset'$, we can examine the entire finite collection to determine if there is a string $\sigma$ among them which remains claimed by its current strategy forever, and with $\sigma \in W_e$.

We claim that there is such a $\sigma$ if and only if $X \cap W_e \neq \emptyset$.  In the one direction, if there is such a $\sigma$, then $\sigma \in X$ by construction, so $\sigma \in X \cap W_e$.

In the other direction, if there is no such $\sigma$, note that this implies that $R_e$'s ultimate directive is {\em avoid} -- if it were {\em meet}, then $R_e$'s strings are amongst those examined, so it must eventually renounce its claim to all of them, resulting in $R_e$ changing directive to {\em avoid}, contrary to choice of $s_0$.  Now for any $\tau \in W_e \cap N_U$, we consider two cases: $|\tau| < 2e+4$, and $|\tau| \ge 2e+4$.

If $|\tau| < 2e+4$, then $\tau$ is too short to be claimed by any strategy of lower priority than $R_e$, and by assumption $\tau$ cannot be ultimately claimed by any strategy of higher priority with ultimate directive {\em meet}.  So $\tau \not \in X$.

If $|\tau| \ge 2e+4$, then $\tau$ will eventually be claimed by $R_e$, by construction, and so $\tau \not \in X$.
\end{proof}
This completes the proof.
\end{proof}

\begin{cor}
For some, and hence any, universal prefix-free machine $U$,
\[
\limsup_{\tau\in N_U} K(\sigma\mid \tau) = \limsup_{\tau \in M_U} K(\sigma\mid \tau) =^+ K^{\emptyset'}(\sigma).
\]
\end{cor}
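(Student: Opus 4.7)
The plan is to chain together the three previous results and observe that the corollary follows by bookkeeping. First, by \cref{prop:semi_low_existence} there is at least one universal prefix-free machine $U$ together with an infinite semi-low $X \subseteq N_U$. I will prove the equation for this particular $U$, and then Lemma 5.1 (the machine-independence lemma) immediately promotes the conclusion to every universal prefix-free machine, which is exactly the ``for some, and hence any'' phrasing of the statement.

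Fixing $U$ and $X$ as above, identify $2^{<\omega}$ with $\omega$ by a standard effective bijection, so that semi-lowness transfers and $K(\sigma\mid\tau) =^+ K(\sigma\mid n)$ whenever $\tau$ corresponds to $n$. Apply \cref{prop:semi-lowness_suffices} to $X$ to obtain
\[
K^{\emptyset'}(\sigma) =^+ \limsup_{\tau\in X} K(\sigma\mid\tau).
\]
Since $X \subseteq N_U \subseteq M_U \subseteq 2^{<\omega}$ and a limit supremum taken over a larger set is at least as large, I then have
\[
\limsup_{\tau\in X}K(\sigma\mid\tau) \;\le\; \limsup_{\tau\in N_U} K(\sigma\mid\tau) \;\le\; \limsup_{\tau\in M_U} K(\sigma\mid\tau) \;\le\; \limsup_{\tau\in 2^{<\omega}} K(\sigma\mid\tau).
\]
The rightmost term equals $K^{\emptyset'}(\sigma)$ up to an additive constant by the unrelativized \cref{thm:motivation} (applied after the effective identification). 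Since both ends of the chain agree with $K^{\emptyset'}(\sigma)$ up to a constant, so does each intermediate quantity.

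The genuinely hard work has already been absorbed into \cref{prop:semi_low_existence} (building a universal $U$ admitting an infinite semi-low subset of $N_U$) and \cref{prop:semi-lowness_suffices} (showing semi-lowness is exactly the structural property that lets one recover all of $K^{\emptyset'}$ from a limit supremum along an infinite subset). Nothing in this final step should go beyond assembly, and the only care required is keeping track of the effective identification between strings and natural numbers, which only ever costs additive constants and therefore does not affect any of the $=^+$ or $\le^+$ relations above.
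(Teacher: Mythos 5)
Your proposal is correct and follows essentially the same argument as the paper: apply \cref{prop:semi_low_existence} to get a universal $U$ with an infinite semi-low $X\subseteq N_U$, squeeze via $X\subseteq N_U\subseteq M_U\subseteq 2^{<\omega}$ between \cref{prop:semi-lowness_suffices} and \cref{thm:motivation}, and invoke the machine-independence lemma (Lemma 5.1) to promote to all universal machines. The only difference is cosmetic, namely that you spell out the machine-independence step and the $2^{<\omega}\leftrightarrow\omega$ identification slightly more explicitly than the paper does.
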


\begin{proof}
As this is independent of choice of machine, let $U$ and $X$ be as in \Cref{prop:semi_low_existence}.  Then
\[
K^{\emptyset'}(\sigma) =^+ \limsup_{\tau \in X} K(\sigma\mid \tau) \le \limsup_{\tau \in N_U} K(\sigma\mid \tau) \le \limsup_{\tau \in M_U} K(\sigma\mid \tau) \le \limsup_{\tau \in 2^{<\omega}} K(\sigma\mid \tau) =^+ K^{\emptyset'}(\sigma),
\]
where the first equality is by \Cref{prop:semi-lowness_suffices}, the last is by \Cref{thm:motivation}, and the inequalities are by subset.
\end{proof}

\section{Where the limsup's live, and finite strings as oracles}
\label{misc}

Here we collect some miscellaneous results about finite strings as oracles.  The first is motivated by our (numerous!)\ failed attempts to prove \cref{thm:main_result} before we finally discovered the method of \cref{semi,main}.  

One avenue we pursued was attempting to determine for which $m$ does $K(\sigma\mid m)$ achieve $\limsup_n K(\sigma\mid n)$.  A natural candidate is the nondeficiency stages: fix a computable enumeration $(a_m)_{m \in \omega}$ of $\emptyset'$, and define
\[
E = \{ m : (\forall n > m)\, [a_m < a_n]\}.
\]
This is the basis for the method of {\em true stages} 
(see Montalb{\'a}n \cite{montalban}
for a modern interpretation for higher level 
priority arguments, but the idea going back to Dekker \cite{De},
as per Soare \cite{Soare}, Ch.\ V 2.5), where the elements of $E$ are employed because they make correct guesses about $\emptyset'$ (as we shall see in a moment).

However, this turns out to be approaching from the wrong direction.  Since $K^{\emptyset'}(\sigma) = \limsup_n K(\sigma\mid n)$, to find places where the limit supremum is achieved, we are not concerned with doing {\em as well as} $\emptyset'$ -- we are concerned with doing {\em no better than} $\emptyset'$.  Thus we are looking not for $n$ which are powerful, but for those which are weak.

The following result says that for $m \in E$, $K(\sigma\mid m)$ does much better than $\limsup_n K(\sigma\mid n)$.
\begin{prop}
\[
\limsup_{m \in E} K(\sigma \mid m) =^+ K^{\emptyset^{(2)}}(\sigma).
\]
\end{prop}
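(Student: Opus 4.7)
The plan hinges on the fact that for $m \in E$, the condition $m$ is computably equivalent to $\tau_m := \emptyset' \uh (a_m+1)$: from $m$ one produces $\tau_m$ by running the enumeration for $m$ stages (this is correct precisely because $m \in E$), and from $\tau_m$ one recovers $m$ by counting its $1$'s.  Hence $K(\sigma \mid m) =^+ K(\sigma \mid \tau_m)$ on $E$, and I would analyse the latter.

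For $\limsup_{m \in E} K(\sigma \mid m) \le^+ K^{\emptyset^{(2)}}(\sigma)$, fix $M^{\emptyset^{(2)}}$ and $\rho$ of length $K^{\emptyset^{(2)}}(\sigma)$ with $M^{\emptyset^{(2)}}(\rho) = \sigma$ using $\emptyset^{(2)}$-oracle only on $[0, u]$.  Because $\emptyset^{(2)}$ is $\emptyset'$-c.e., there is a finite $u'$ such that every element of $\emptyset^{(2)} \cap [0, u]$ has an enumeration witness using only $\emptyset' \uh u'$.  For every $m \in E$ with $a_m + 1 \ge u'$, $\tau_m$ already suffices to run this $\emptyset'$-enumeration far enough to pin down $\emptyset^{(2)} \cap [0, u]$ exactly, and then to simulate $M^{\emptyset^{(2)}}(\rho)$ unrelativisedly.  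Packaged as a prefix-free machine on inputs $(\rho, \tau)$ this yields $K(\sigma \mid \tau_m) \le^+ K^{\emptyset^{(2)}}(\sigma)$ for cofinitely many $m \in E$, which passes to the $\limsup$.

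For the reverse inequality, set $F(\sigma) = \limsup_{m \in E} K(\sigma \mid m)$.  Since $2^{-x}$ is decreasing, Fatou's lemma for counting measure gives
\[
\sum_\sigma 2^{-F(\sigma)} = \sum_\sigma \liminf_{m \in E} 2^{-K(\sigma \mid m)} \le \liminf_{m \in E} \sum_\sigma 2^{-K(\sigma \mid m)} \le 1,
\]
so $R = \{(\sigma, s) : F(\sigma) \le s\}$ is a request set of finite total weight.  The predicate $F(\sigma) \le s$ is $\Sigma^0_3$, because $E$ is $\emptyset'$-decidable and ``$K(\sigma \mid m) \le s$'' is c.e.\ uniformly in $m$, hence $R$ is $\emptyset^{(2)}$-c.e.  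Feeding $R$ into the machine existence theorem relativised to $\emptyset^{(2)}$ yields the bound $K^{\emptyset^{(2)}}(\sigma) \le^+ F(\sigma)$.

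The main obstacle is seeing why a finite initial segment of $\emptyset'$ suffices for $\emptyset^{(2)}$-level compression, not merely $\emptyset'$-level compression.  This is what distinguishes $m \in E$ from a generic conditioning: using $\tau_m$ one can enumerate and, crucially, correctly \emph{terminate} the $\emptyset'$-enumeration of any bounded initial segment of $\emptyset^{(2)}$, thereby recovering full $\emptyset^{(2)}$-oracle answers on that range.  The rest of both directions is then routine, modulo the standard Fatou/KC machinery.
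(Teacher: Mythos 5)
Your Fatou/KC direction is correct and is a genuinely different route from the paper's. The paper establishes $K^{\emptyset^{(2)}}(\sigma)\le^+\limsup_{m\in E}K(\sigma\mid m)$ by observing that $\emptyset'$ computes the increasing enumeration $E=\{b_0<b_1<\dots\}$, so $K^{\emptyset'}(\sigma\mid n)\le^+K(\sigma\mid b_n)$, and then black-boxes the relativized \cref{thm:motivation}; you instead build the request set $R=\{(\sigma,s):F(\sigma)\le s\}$, bound its weight via Fatou, check that ``$F(\sigma)\le s$'' is $\Sigma^0_3$, and feed $R$ to the KC theorem relativized to $\emptyset^{(2)}$. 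That is more self-contained and is a clean alternative.

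The gap is in the forward direction, $\limsup_{m\in E}K(\sigma\mid m)\le^+K^{\emptyset^{(2)}}(\sigma)$. You assert that for $m\in E$ with $a_m+1\ge u'$, the string $\tau_m$ ``suffices to pin down $\emptyset^{(2)}\cap[0,u]$ exactly,'' but this does not follow from what you establish. Knowing the oracle uses of the positive witnesses only guarantees those elements will \emph{eventually} be enumerated by the $\emptyset'$-c.e.\ process run with oracle $\tau_m$; it supplies no halting condition, and in fact no finite fragment of $\emptyset'$ (nor even $\emptyset'$ in its entirety) can \emph{decide} $\emptyset^{(2)}$ on any nontrivial range. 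Your summary paragraph names ``correctly terminate'' as the crux, but never says how termination is achieved. A fix exists: the conditioning on $m$ also supplies a time bound, and for all sufficiently large $m\in E$ the enumeration run for $m$ steps with oracle $\tau_m$ agrees with $\emptyset^{(2)}$ on $[0,u]$ (no false positives since $\tau_m\prec\emptyset'$, all true positives since both the use $u'$ and the witnessing stages are eventually dominated). Without that time-bound idea the argument as written fails. The paper sidesteps the whole issue: since $\tau_m\prec\emptyset'$ for $m\in E$, the sub-$\limsup$ bound $\limsup_{m\in E}K(\sigma\mid\emptyset'\uh a_m)\le\limsup_n K(\sigma\mid\emptyset'\uh n)=^+K^{\emptyset^{(2)}}(\sigma)$ is immediate from the footnoted form of the relativized \cref{thm:motivation}.
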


\begin{proof}
For any $m$, define $\tau_m \in 2^{<\omega}$ to be the string of length $a_m$ such that $\tau_m(x) = 1$ iff $x = a_n$ for some $n < m$.  Note that $m \mapsto \tau_m$ is effective, so $K(\sigma\mid m) \le^+ K(\sigma\mid \tau_m)$.  Also, $\sigma_m \prec \emptyset'$ iff $m \in E$, so
\begin{align*}
\limsup_{m \in E} K(\sigma \mid m) &\le^+ \limsup_{m \in E} K(\sigma \mid \emptyset'\uh a_m)\\ &\le \limsup_n K(\sigma \mid \emptyset'\uh n)\\ &=^+ \limsup_n K^{\emptyset'}(\sigma\mid n).
\end{align*}
Conversely, $\emptyset'$ can compute the increasing enumeration of $E$, $E = \{ b_0 < b_1 < \dots\}$, so $K^{\emptyset'}(\sigma\mid n) \le^+ K(\sigma \mid b_n)$, giving
\[
\limsup_{m \in E} K(\sigma \mid m) =^+ \limsup_n K^{\emptyset'}(\sigma\mid n).
\]
By \cref{thm:motivation} relativized to $\emptyset'$, this is (up to an additive constant) $K^{\emptyset^{(2)}}(\sigma)$.
\end{proof}

The reader might note the following somewhat paradoxical situation.
The natural proof to show that $K^{\emptyset'}(\sigma)\le ^+
\limsup_n K(\sigma\mid n)$ is to folly approximate 
$K^{\emptyset'}(\sigma)[n]$ at each stage $n$, where both the computations and 
oracles are approximated for $n$ stages.
We would do this as part of the computation of $K^{\overline{n}}(\sigma)$ 
for some machine $M^{\overline{n}}(\sigma)$ via the Machine Existence Theorem
as mentioned above. 
Therefore, for all stages $t>n$ it can only be that 
$K(\sigma\mid n)[t]\le^+ K(\sigma\mid n)[n] \le^+ K^{\emptyset'}(\sigma)[n].$
The true value of $K^{\emptyset'}(\sigma)$ must have been achieved at a true 
stage, but we see above, it does not happen at almost all true stages.
Thus it must be achieved at infinitely many non-true stages $s$, but where
$K^{\emptyset'}[s]=^+ K^{\emptyset'}(\sigma).$ We don't really understand the 
characteristics of such ``almost true'' stages $s$. We also point out that 
the limsups appear to  be achieved for different $s$'s for different $\sigma$'s.

\cref{thm:motivation} says that for almost any string $\sigma$, almost any finite oracle can aid in the compression of $\sigma$.  A priori, however, there is no reason to expect there to be a single finite oracle which aids in the compression of almost every string.  Nevertheless, this is the case.

\begin{thm}\label{thm:e-compressing}
For all $e$ there is a string $\rho$ such that for almost all $\tau$,
$K(\tau\mid \rho)<K(\tau)-e.$
$K^\xi(\tau)<K(\tau)-e.$
\end{thm}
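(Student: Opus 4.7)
The plan is to apply the relativized Machine Existence Theorem, taking $\rho$ to encode a stage $s^*$ at which the enumeration of $U$ has nearly stabilized.  Fix $e$, let $c$ be a constant absorbing coding overhead (so that any $\rho$-prefix-free machine built uniformly from a $\rho$-c.e.\ request set incurs at most $c$ additional bits when translated to $K(\cdot\mid\rho)$), and set $n=e+c+1$.  Choose $s^*$ satisfying (a)~$\Omega-\Omega_{s^*}<2^{-n}$ and (b)~for every $\sigma$ with $|\sigma|\le e+c$, if $U(\sigma)\converge$ then $U_{s^*}(\sigma)\converge$; such an $s^*$ exists since both conditions are eventually satisfied.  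Let $\rho$ be a finite string coding $s^*$.

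Let $F=\={range}(U_{s^*})$, which is finite.  Condition (b) ensures that every $\tau$ with $K(\tau)\le e+c$ lies in $F$, and condition (a) gives $\sum_{\tau\notin F}2^{-K(\tau)}\le \Omega-\Omega_{s^*}<2^{-n}$, since for $\tau\notin F$ all $U$-descriptions of $\tau$ are enumerated after stage $s^*$.  Relative to $\rho$, enumerate the request set
\[
A^\rho=\{(\tau, K_t(\tau)-e-c):\tau\notin F,\ K_t(\tau)\text{ strictly decreases at stage }t\}.
\]
Since $K(\tau)>e+c$ for $\tau\notin F$, every requested length is positive.  For fixed $\tau\notin F$, the values $K_t(\tau)$ run through a strictly decreasing sequence of integers bounded below by $K(\tau)$, so $\tau$'s contribution to $\={wt}(A^\rho)$ is at most $2^{e+c+1}\cdot 2^{-K(\tau)}$.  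Summing over $\tau\notin F$ gives $\={wt}(A^\rho)\le 2^{e+c+1}\cdot 2^{-n}\le 1$.

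Apply the Machine Existence Theorem relativized to $\rho$ to obtain a $\rho$-prefix-free machine $V^\rho$ with $K^{V^\rho}(\tau)\le s$ for every $(\tau,s)\in A^\rho$.  The pair $(\tau, K(\tau)-e-c)$ lies in $A^\rho$ for each $\tau\notin F$ (arising as the final decrease of $K_t(\tau)$), so $K^{V^\rho}(\tau)\le K(\tau)-e-c$; absorbing the coding overhead into $c$ yields $K(\tau\mid\rho)<K(\tau)-e$ for every $\tau\notin F$.  Since $F$ is finite, this is the desired conclusion.

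The main subtlety is the Kraft obstruction: the relativized inequality $\sum_\tau 2^{-K(\tau\mid\rho)}\le 1$ forces any cofinite set on which $K(\cdot\mid\rho)<K(\cdot)-e$ holds to have total $\sum 2^{-K(\tau)}$ at most $2^{-e}$, so the finite exception set $F$ must absorb essentially all of the weight of $\Omega$.  Our construction does exactly this: $F$ consists of the strings whose descriptions appear early in the enumeration of $U$, which are precisely those contributing substantial weight.  Once one notices this, choosing the constants $n$ and $c$ so that all overheads align is routine.
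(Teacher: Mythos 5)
Your proof is correct and takes essentially the same approach as the paper's second proof: both apply the conditional complexity variant of the Machine Existence Theorem, letting $\rho$ determine a finite exception set of strings that absorbs almost all of $\sum_\tau 2^{-K(\tau)}$, so that shifted requests $(\tau, K(\tau)-e-c)$ for the remaining $\tau$ have total weight at most $1$. The only cosmetic difference is that you encode a stage $s^*$ and recover the exception set as $\mathrm{range}(U_{s^*})$, where the paper encodes the pair $\langle D, k\rangle$ directly; note that, as the paper does with $k$, you should also fold the shift parameter $e+c$ into $\rho$ so that a single uniformly c.e.\ request set (and hence a single constant $c$) serves every $e$, otherwise the choice of $s^*$ via $n=e+c+1$ becomes circular when $c$ depends on $e$.
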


We give two proofs.  The first is based on Symmetry of Information and \cref{thm:motivation}.

\begin{proof}[First proof of \cref{thm:e-compressing}]
 By \cref{thm:motivation},
$K^{\emptyset'}(\sigma)=^+ \limsup_n K(\sigma\mid n).$
The proof shows that
$K^{\emptyset'}(\sigma)=^+ \limsup_{\{\tau\mid |\tau|\to \infty\}}
K(\sigma\mid \tau)$. (Alternatively use $1\tau$ for any string $\tau$,
with a $=^+$, change.)
By a counting argument, we know that as $|\tau|\to \infty$, 
$|\tau^*|\to \infty$.

Solovay \cite{So} (see Downey and Hirschfeldt \cite{DH}, Ch 10.2, Lemma 10.2.6)
proved that
\[
K^{\emptyset'} (n)\le K(n)-\alpha(n)+O(\log \alpha(n)),
\]
where 
$\alpha(n)=\min \{K(m)\mid m\ge n\}$.

Thus, for all $e$, 
and almost all $\sigma$,
\[
K^{\emptyset'}(\sigma)<K(\sigma)-e.
\]
Hence, for all 
$e$ there is an $m$  and  $\sigma=n$
such that for all $\tau$ with
$|\tau|>m$,
\[
K(\sigma\mid \tau^*)<K(\sigma)-e.
\]

Now, Symmetry of Information (Levin  and G{\'a}cs \cite{Ga}, Chaitin \cite{58})
says that
\[
K(\sigma,\tau)=K(\sigma)+K(\tau\mid \sigma^*)=K(\tau)+K(\sigma\mid \tau^*).
\]
Thence,
for $\sigma$ as above and for $\tau$ with $|\tau|>m$,
we have 
\[
K(\sigma)+K(\tau\mid \sigma^*)=K(\tau)+K(\sigma\mid \tau^*).
\]
Since $K(\sigma\mid \tau^*)<K(\sigma)-e,$
this gives 
$$K(\sigma)+K(\tau\mid \sigma^*)<K(\tau)+K(\sigma)-e.$$
Thus,
$K(\tau\mid \sigma^*)<K(\tau)-e.$ Choosing $\sigma^*=\rho$ gives the result.
\end{proof}

Our second proof is based on the conditional complexity variant of the Machine Existence Theorem, which we first state.

\begin{prop}
Suppose $A \subseteq 2^{<\omega} \times \omega \times 2^{<\omega}$ is a c.e.\ set such that for every $\tau \in 2^{<\omega}$, $\sum_{(\sigma, s, \tau) \in A} 2^{-s} \le 1$.  Then for all $(\sigma, s, \tau) \in A$, $K(\sigma\mid \tau) \le^+ s$.
\end{prop}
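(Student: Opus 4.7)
The statement is the uniform relativization of the Machine Existence Theorem to finite oracles, and my plan is to mimic the standard proof of the KC theorem, treating $\tau$ as a parameter handled uniformly.

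First I would construct an oracle prefix-free machine $V$ such that, for every finite string $\tau$, the specialization $V^\tau$ is a prefix-free machine with $\={wt}(\={dom}(V^\tau)) \le 1$, and such that whenever $(\sigma, s, \tau) \in A$ there is some $\rho$ of length $s$ with $V^\tau(\rho) = \sigma$.  Given $\tau$ in the oracle, $V$ enumerates $A$ and attends only to those triples whose third coordinate equals $\tau$; for each such triple $(\sigma, s, \tau)$ it runs the standard Chaitin/Levin interval-allocation routine from the proof of the unrelativized KC theorem, reserving a $\rho$ of length $s$ as a $V^\tau$-code for $\sigma$.  The hypothesis that $\sum_{(\sigma, s, \tau) \in A} 2^{-s} \le 1$ for this particular $\tau$ guarantees that the allocation never exceeds the measure budget, so $V^\tau$ is a legitimate prefix-free machine.

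The construction is genuinely uniform in $\tau$ because no action of $V^\tau$ depends on triples whose third coordinate differs from $\tau$.  Hence $V$ is a single oracle prefix-free machine with a single index, and by universality of the two-input machine defining $K(\cdot\mid\cdot)$, there is a fixed constant $c$ (independent of $\tau$) and a fixed prefix $p$ of length $c$ such that every code $\rho \in \={dom}(V^\tau)$ yields $K(V^\tau(\rho)\mid\tau) \le |\rho| + c$.  Applied to the codes produced by our construction, this gives $K(\sigma\mid\tau) \le s + c$ for all $(\sigma, s, \tau) \in A$, which is exactly $K(\sigma\mid\tau) \le^+ s$.

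The only point requiring care is ensuring that the overhead constant $c$ does not depend on $\tau$: this follows because the translation from $V$ to the universal oracle prefix-free machine prepends the same fixed $p$ regardless of which finite oracle is queried, and the interval allocation at each stage depends only on the stream of requests with matching third coordinate.  I expect no conceptual obstacle; the result is essentially a direct, parameterized relativization of the Machine Existence Theorem, and any difficulty is purely notational bookkeeping to isolate the per-$\tau$ request streams inside the single oracle machine $V$.
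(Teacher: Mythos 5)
Your proposal is correct. The paper states this proposition without proof, implicitly treating it as the standard uniformly-relativized form of the Machine Existence Theorem, and your argument is precisely the canonical one: build a single oracle prefix-free machine $V$ that, given $\tau$, filters $A$ to the triples with matching third coordinate and runs the usual KC allocation on that sub-stream, so the per-$\tau$ weight bound keeps each $V^\tau$ within budget, and a single coding constant $c$ results from the universality of the two-input machine. One minor point of care worth keeping in mind (which you implicitly handle): to filter on the third coordinate, $V$ must first recover $\tau$ from the oracle, which is possible because the oracle is the self-delimiting string $\overline{\tau}$; this is the same convention the paper uses for $K(\cdot\mid n)$, so there is no gap.
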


\begin{proof}[Second proof of \cref{thm:e-compressing}]
We will enumerate a c.e.\ set $A$.  Fix an effective bijection $\tau = \seq{D, k}$ between $\tau \in 2^{<\omega}$ and pairs $\seq{D, k}$ with $D \subset 2^{<\omega}$ finite and $k \in \omega$.  For $\tau = \seq{D, k}$, for every $\sigma \in 2^{<\omega} \setminus D$, we enumerate $(\sigma, t-k, \tau)$ into $A$ for every $t \ge K(\sigma)$, provided doing so does not cause $\sum_{(\sigma, s, \tau) \in A} 2^{-s}$ to exceed 1.

Fix the constant $c$ such that $K(\sigma\mid \tau) \le s+c$ for every $(\sigma,s, \tau) \in A$.  As $\sum_\sigma 2^{-K(\sigma)} < 1$, there is some finite $D$ such that $\sum_{\sigma \not \in D} 2^{-K(\sigma)} < 2^{-(e+c+1)}$.  Fix $\tau = \seq{D, e+c}$.  Then $\sum_{\sigma \not \in D} \sum_{t \ge K(\sigma} 2^{-(t - e - c)} < 1$, so $(\sigma, t-e-c, \tau)$ is successfully enumerated into $A$ for all such $\sigma$ and $t$.  Thus $K(\sigma\mid\tau) \le K(\sigma)  - e - c+c = K(\sigma)-e$ for all $\sigma \not \in D$.
\end{proof}

Define $\rho$ to be $e$-\emph{compressing}
if for almost all $\tau$,
$K(\tau\mid \rho)<K(\tau)-e$. 

\begin{qstn}
What can be said about the set 
$C_e=\{\rho\mid \rho $ is $e$-compressing$\}$?
\end{qstn}

\end{document}